\tikzset{
	modal/.style={>=stealth’,shorten >=1pt,shorten <=1pt,auto,node distance=1.5cm,
		semithick},
	world/.style={circle, draw,minimum size=.1cm,fill=gray!15},
	point/.style={circle,draw,inner sep=0.3mm,fill=black},
	circ/.style={circle,draw,inner sep=0.1mm,fill=white},
	reflexive above/.style={->,loop,looseness=7,in=120,out=60},
	reflexive below/.style={->,loop,looseness=7,in=240,out=300},
	reflexive left/.style={->,loop,looseness=7,in=150,out=210},
	reflexive right/.style={->,loop,looseness=7,in=30,out=330}
}
\tikzstyle{vertex}=[circle,fill=black!25,minimum size=14pt,inner sep=0pt]
\tikzstyle{selected vertex} = [vertex, fill=red!24]
\tikzstyle{edge} = [draw,thick,-]
\tikzstyle{weight} = [font=\small]
\tikzstyle{selected edge} = [draw,line width=5pt,-,red!50]
\tikzstyle{ignored edge} = [draw,line width=5pt,-,black!20]
\theoremstyle{definition}
\newtheorem{defn}{Definition}[section]
\newtheorem{fact}[defn]{Fact}
\newtheorem{thm}[defn]{Theorem}
\newtheorem{corr}[defn]{Corollary}
\newtheorem{lem}[defn]{Lemma}
\newtheorem{remark}[defn]{Remark}
\newtheorem{claim}[defn]{Claim}
\title[Distinguishing chromatic number of middle and subdivision graphs]{Distinguishing chromatic number of middle and subdivision graphs}
\author{Amitayu Banerjee}
\address{Alfr\'ed R\'enyi Institute of Mathematics, Reáltanoda utca 13-15, 1053, Budapest, Hungary}
\email[Corresponding author]{banerjee.amitayu@gmail.com}
\author{Alexa Gopaulsingh}
\address{E\"otv\"os Lor\'and University, Department of Logic, M\'{u}zeum krt. 4, 1088, Budapest, Hungary}
\email{alexa279e@gmail.com}
\author{Zal\'{a}n Moln\'{a}r}
\address{E\"otv\"os Lor\'and University, Department of Logic, M\'{u}zeum krt. 4, 1088, Budapest, Hungary}
\email{mozaag@gmail.com}
\date{}
\subjclass[2020]{05C15, 05C25.}
\keywords{middle graph, subdivision graph, automorphism group, distinguishing number, distinguishing chromatic number}
\begin{document}
	\begin{abstract}
		Let $G$ be a simple finite connected graph of order $n\geq 3$ with maximum degree $\Delta(G)$. In 2016, Kalinowski, Pil\'{s}niak, and Wo\'{z}niak introduced the total distinguishing number $D''(G)$ of $G$. 
		We prove the following and show that the upper bound mentioned in (3) is sharp:
		\begin{enumerate}
			\item The distinguishing chromatic number $\chi_{D}(M(G))$ of the middle graph $M(G)$ of the graph $G$ is $\Delta(G)+1$ except for four small graphs $C_{4}, K_{4}, C_{6}$, and $K_{3,3}$, and $\Delta(G)+2$ otherwise.

			\item Inspired by a recent result of Mirafzal, we show that the distinguishing number $D(S(G))$ of the subdivision graph $S(G)$ of $G$ is $D''(G)$.
			
			Consequently, $D(S(G))$ is at most $\lceil \sqrt{\Delta(G)}\rceil$.
			\item Let $G\not\cong C_{n}$, where $C_{n}$ is the cycle graph of order $n$. If the distinguishing number $D(G)$ of $G$ is at least 3, then the distinguishing chromatic number $\chi_{D}(S(G))$ of $S(G)$ is at most $D(G)$, and if $D(G)$ is at most $2$, then $\chi_D(S(G))= D(G)+1$.
			\item If $D(G)\neq 1$ 
			and $\chi_D(G)=2$, then the automorphism group of $G$ consists of $2$ elements.
		\end{enumerate}
	\end{abstract}
	
	\maketitle
	\section{Introduction and definitions}
	
	Let $G = (V(G), E(G))$ be a connected graph with the vertex set $V(G)$ and the edge set $E(G)$, and let 
	$Aut(G)$ be its automorphism group. 
	The {\em line graph} $L(G)$ of $G$ is the graph with vertex set $E(G)$, where vertices $x$ and $y$ are
	adjacent in $L(G)$ if and only if the corresponding edges $x$ and $y$ share a common vertex in $G$. 
	The {\em subdivision graph} $S(G)$ of $G$ is obtained from $G$ by putting a new vertex in
	the middle of every edge of $G$. In \cite{HY1976}, Hamada and Yoshimura introduced the middle graph of a graph. 
	The {\em middle graph} $M(G)$ of $G$ is the graph whose vertex set is $V(G) \cup E(G)$,
	and two vertices $x, y$ in the vertex set of $M(G)$ are adjacent in $M(G)$ in case one of the following holds:
	\begin{enumerate}
		\item $x, y \in E(G)$ and $x, y$ are adjacent in $G$;
		\item $x \in V (G), y \in E(G)$, and $x, y$ are incident in $G$.
	\end{enumerate}
	
	Albertson and Collins \cite{AC1996} studied the distinguishing number of a graph and the distinguishing chromatic number of a graph was introduced by Collins and Trenk \cite{CT2006}. 
	Let $f$ be an assignment of colors to either vertices or edges of $G$. An automorphism $\psi$ of $G$ {\em preserves} $f$ if each vertex (edge) of $G$ is mapped to a vertex (edge) of the same color.
	Let $id_{G}$ denote the trivial automorphism of $G$.
	A vertex coloring $h:V(G)\rightarrow\{1,...,r\}$ is {\em $r$-distinguishing} if $h$ is preserved only by $id_{G}$. 
	The coloring $h$ is {\em $r$-proper distinguishing} if $h$ is $r$-distinguishing and a proper coloring. The {\em distinguishing number} of $G$, denoted by $D(G)$, is the least integer $r$ such that $G$ has an $r$-distinguishing vertex coloring. The {\em distinguishing chromatic number} of $G$, denoted by $\chi_{D}(G)$, is the least integer $r$ such that $G$ has an $r$-proper distinguishing vertex coloring. Let $\chi(G)$ denote the chromatic number of $G$. The open neighborhood of $v \in V (G)$ is the set $N_{G}(v) =\{u \in V (G) : \{u,v\} \in E(G)\}$.

	\subsection{Motivation and results} Let $C_{n}$ be a cycle graph of $n$ vertices, $K_{n}$ be a complete graph of $n$ vertices, and $K_{n,m}$ be a complete bipartite graph with bipartitions of size $n$ and $m$. 
	\begin{itemize}
		\item 
		In 1998, Nihei \cite{Nih1998} established that $\chi(M(G)) = \Delta(G) + 1$ for any graph $G$. 
		In Section 2, we apply a result of Hamada–Yoshimura \cite{HY1976} and two recent results of Alikhani–Soltani \cite{AS2020}, and Kalinowski–Pil\'{s}niak \cite{KP2015} to determine $\chi_{D}(M(G))$ of a connected graph $G$.
		In particular, we prove that if $G$ is a connected graph of order $n\geq 3$, then $\chi_{D}(M(G))=\Delta(G)+1$ if $G\not\in\{C_{4}, K_{4}, C_{6}, K_{3,3}\}$ and $\chi_{D}(M(G))=\Delta(G)+2$ otherwise.
		
		\item In 2016, Kalinowski, Pil\'{s}niak, and Wo\'{z}niak \cite{KPW2016} introduced the total distinguishing number $D''(G)$ of a graph $G$ as the least integer $d$ such that $G$ has a total coloring (not necessarily proper) 
		with $d$ colors that is preserved only by $id_{G}$. In the same paper, Kalinowski, Pil\'{s}niak, and Wo\'{z}niak  proved that $D''(G)\leq \lceil \Delta(G)\rceil$ if $G$ is a connected graph of order $n \geq 3$. Recently, Mirafzal \cite{Mir2024} determined
		the automorphism group of $S(G)$ and proved that if $G \not\cong C_{n}$ for any natural number $n$, then $Aut(G) \cong Aut(S(G))$. In Section 3, inspired by the techniques in \cite{Mir2024}, the first author proves that if $G$ is a connected graph of order $n \geq 3$,  then $D(S(G))=D''(G)$. Consequently, $D''(G)\leq \lceil \Delta(G)\rceil$.

		\item Let $G$ be a connected graph such that $G\not\cong C_{n}$ for any natural number $n$. In Section 4, we apply the techniques in \cite{Mir2024} to show that if $D(G)\geq 3$, then $\chi_{D}(S(G))\leq D(G)$, and if $D(G)=2$, then $\chi_D(S(G))= 3$. We also observe that if $D(H)\neq 1$ and $\chi_D(H) = 2$, then $\vert Aut (H)\vert = 2$ for any connected graph $H$.
	\end{itemize}
	
	\section{Distinguishing chromatic number of Middle graphs}
	
	We have the following definition due to Hamada and Yoshimura \cite{HY1976}.
	
	\begin{defn}\label{Definition 2.1}
		Let $G=(V(G), E(G))$ be a graph where $V(G) = \{v_{1},..., v_{p}\}$ for some natural number $p$. To $G$, we add
		$p$ new vertices $\{u_{1},..., u_{p}\}$ and $p$ new edges $\{u_{i}, v_{i}\}$. The new graph is the {\em endline graph} of $G$ denoted by $G^{+}$. The edges $\{u_{i}, v_{i}\}$ are the {\em endline edges} of $G^{+}$. 
	\end{defn}
	
	Kalinowski and Pil\'{s}niak \cite{KP2015} introduced the distinguishing index $D'(G)$ and the distinguishing chromatic index $\chi_{D}'(G)$ of a graph $G$.
	
	\begin{defn}\label{Definition 2.2}
		An edge coloring $h:E(G)\rightarrow\{1,...,r\}$ of a graph $G=(V(G), E(G))$ is {\em $r$-distinguishing} if $h$ is preserved only by $id_{G}$. 
		The {\em distinguishing index} of $G$, denoted by $D'(G)$, is the least integer $r$ such that $G$ has an $r$-distinguishing edge coloring and the {\em distinguishing chromatic index} of $G$, denoted by $\chi_{D}'(G)$, is the least integer $r$ such that $G$ has an $r$-proper distinguishing edge coloring.
	\end{defn}
	
	\begin{fact}\label{Fact 2.3}
		{\em The following holds:
			\begin{enumerate}
				\item (Hamada--Yoshimura; \cite{HY1976}) Let $G$ be any graph. Then $M(G) \cong L(G^{+})$.
				
				\item (Alikhani--Soltani; \cite[Theorem 2.10]{AS2020}) If $G$ is a connected graph of order $n\geq 3$, that is not $Q$ and $L(Q)$ (see Figure \ref{Figure 1}). Then $\chi_{D}(L(G))=\chi_{D}'(G)$.
				
				\item (Kalinowski--Pil\'{s}niak; \cite[Theorem 16]{KP2015}) If $G$ is a connected graph of order $n \geq 3$, then $\chi_{D}'(G) \leq \Delta(G) + 1$, except for four graphs of small order $C_{4}, K_{4}, C_{6}, K_{3,3}$. For these exceptional cases, $\chi_{D}'(G) = \Delta(G) + 2$.
			\end{enumerate}    
		}
	\end{fact}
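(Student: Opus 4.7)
The plan is to dispatch the three items of Fact~\ref{Fact 2.3} separately. Item~(1) admits an elementary direct verification, while (2) and (3) are results cited from the literature, so I would quote them and, for transparency, indicate what their proofs look like.

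For item~(1), the natural strategy is to exhibit an explicit isomorphism $\phi\colon M(G)\to L(G^{+})$. Using the labeling $V(G)=\{v_{1},\dots,v_{p}\}$ together with the endline edges $\{u_{i},v_{i}\}$ of $G^{+}$, I would set $\phi(e)=e$ for each $e\in E(G)$ and $\phi(v_{i})=\{u_{i},v_{i}\}$ for each $i$. Since $V(M(G))=V(G)\cup E(G)$ and $V(L(G^{+}))=E(G)\cup\{\{u_{i},v_{i}\}:1\leq i\leq p\}$, the map $\phi$ is a bijection. To check that adjacency is preserved, I would enumerate the possible types of adjacency in $L(G^{+})$: two edges of $G$ sharing a vertex of $G$ match case~(1) in the definition of $M(G)$; an edge $e\in E(G)$ and an endline edge $\{u_{i},v_{i}\}$ share a vertex exactly when $v_{i}$ is incident with $e$ in $G$, matching case~(2); and two distinct endline edges share no vertex (since the $u_{i}$'s are new), consistent with the fact that no two vertices of $V(G)$ are adjacent in $M(G)$. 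This yields the isomorphism.

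For items~(2) and~(3), my plan is simply to quote \cite{AS2020} and \cite{KP2015}. If a self-contained argument were required, for~(2) I would invoke Whitney's theorem to identify $Aut(L(G))$ with $Aut(G)$ outside a small exceptional list including $Q$ and $L(Q)$, and then verify that the natural bijection $E(G)\leftrightarrow V(L(G))$ sends proper distinguishing edge colorings of $G$ to proper distinguishing vertex colorings of $L(G)$. For~(3), I would take a proper edge coloring of $G$ with $\Delta(G)+1$ colors provided by Vizing's theorem and argue via local recolorings that it can be made distinguishing; the four exceptional graphs $C_{4}, K_{4}, C_{6}, K_{3,3}$ would then be handled by a direct case analysis, exhibiting proper distinguishing edge colorings with $\Delta(G)+2$ colors and ruling out $\Delta(G)+1$.

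The main obstacle is item~(3): the full Kalinowski--Pil\'{s}niak argument requires a careful edge-coloring analysis, auxiliary lemmas on the distinguishing chromatic index of highly symmetric graphs, and an explicit treatment of the four exceptional graphs, which is why it is cited rather than reproved. Item~(1) is a routine bijective verification and item~(2) reduces to Whitney's theorem combined with transfer of the coloring problem through the line-graph correspondence.
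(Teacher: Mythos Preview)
Your proposal is correct and in fact goes beyond what the paper does: Fact~\ref{Fact 2.3} carries no proof in the paper at all---it is stated as a ``Fact'' with citations to \cite{HY1976}, \cite{AS2020}, and \cite{KP2015}, and the authors simply invoke it. Your explicit bijection for item~(1) and your sketches for (2) and (3) are thus strictly more than the paper provides, and your argument for (1) is the standard one (and correct).
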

	
	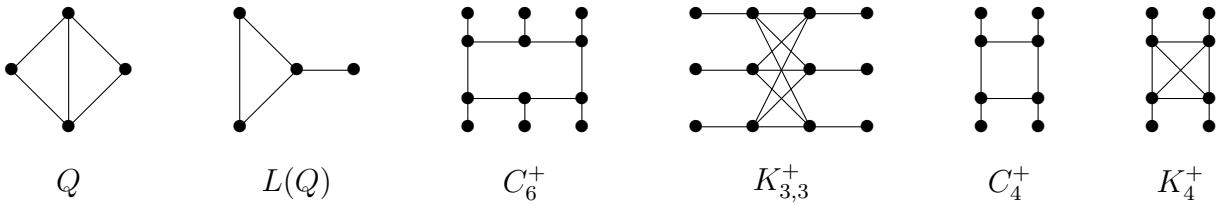
\begin{figure}[!ht]
		\centering
		\begin{minipage}{\textwidth}
			\centering
			\begin{tikzpicture}[scale=0.75]
				\draw[black,] (-10,-1) -- (-9,-2);
				\draw[black,] (-10,-1) -- (-11,-2);
				\draw[black,] (-10,-3) -- (-11,-2);
				\draw[black,] (-10,-3) -- (-9,-2);
				\draw[black,] (-10,-3) -- (-10,-1);
				
				\draw (-10,-1) node {$\bullet$};
				\draw (-9,-2) node {$\bullet$};
				\draw (-11,-2) node {$\bullet$};
				\draw (-10,-3) node {$\bullet$};
				
				\draw (-10,-4) node {$Q$};
				\draw[black,] (-7,-1) -- (-6,-2);
				\draw[black,] (-7,-3) -- (-6,-2);
				\draw[black,] (-7,-3) -- (-7,-1);
				\draw[black,] (-6,-2) -- (-5,-2);
				
				\draw (-7,-1) node {$\bullet$};
				\draw (-6,-2) node {$\bullet$};
				\draw (-5,-2) node {$\bullet$};
				\draw (-7,-3) node {$\bullet$};
				
				\draw (-6,-4) node {$L(Q)$};
				\draw[black,] (-3,-1.5) -- (-1,-1.5);
				\draw[black,] (-3,-2.5) -- (-1,-2.5);
				\draw[black,] (-3,-2.5) -- (-3,-1.5);
				\draw[black,] (-1,-2.5) -- (-1,-1.5);
				\draw[black,] (-3,-1.5) -- (-3,-1);
				\draw[black,] (-3,-2.5) -- (-3,-3);
				\draw[black,] (-2,-1.5) -- (-2,-1);
				\draw[black,] (-2,-2.5) -- (-2,-3);
				\draw[black,] (-1,-1.5) -- (-1,-1);
				\draw[black,] (-1,-2.5) -- (-1,-3);
				\draw (-3,-1.5) node {$\bullet$};
				\draw (-3,-2.5) node {$\bullet$};
				\draw (-1,-1.5) node {$\bullet$};
				\draw (-1,-2.5) node {$\bullet$};
				\draw (-2,-1.5) node {$\bullet$};
				\draw (-2,-2.5) node {$\bullet$};
				\draw (-3,-1) node {$\bullet$};
				\draw (-3,-3) node {$\bullet$};
				\draw (-1,-1) node {$\bullet$};
				\draw (-1,-3) node {$\bullet$};
				\draw (-2,-1) node {$\bullet$};
				\draw (-2,-3) node {$\bullet$};
				
				\draw (-2,-4) node {$C_{6}^{+}$};
				\draw (1,-1) node {$\bullet$};
				\draw (1,-2) node {$\bullet$};
				\draw (1,-3) node {$\bullet$};
				\draw (2,-1) node {$\bullet$};
				\draw (2,-2) node {$\bullet$};
				\draw (2,-3) node {$\bullet$};
				\draw (3,-1) node {$\bullet$};
				\draw (3,-2) node {$\bullet$};
				\draw (3,-3) node {$\bullet$};
				\draw (4,-1) node {$\bullet$};
				\draw (4,-2) node {$\bullet$};
				\draw (4,-3) node {$\bullet$};
				
				\draw[black,] (2,-1) -- (3,-1);
				\draw[black,] (2,-1) -- (3,-2);
				\draw[black,] (2,-1) -- (3,-3);
				\draw[black,] (2,-2) -- (3,-1);
				\draw[black,] (2,-2) -- (3,-2);
				\draw[black,] (2,-2) -- (3,-3);
				\draw[black,] (2,-3) -- (3,-1);
				\draw[black,] (2,-3) -- (3,-2);
				\draw[black,] (2,-3) -- (3,-3);
				
				\draw[black,] (1,-1) -- (2,-1);
				\draw[black,] (1,-2) -- (2,-2);
				\draw[black,] (1,-3) -- (2,-3);
				\draw[black,] (3,-1) -- (4,-1);
				\draw[black,] (3,-2) -- (4,-2);
				\draw[black,] (3,-3) -- (4,-3);
				
				\draw (2.5,-4) node {$K_{3,3}^{+}$};
				\draw (6,-1) node {$\bullet$};
				\draw (7,-1) node {$\bullet$};
				\draw (6,-1.5) node {$\bullet$};
				\draw (7,-1.5) node {$\bullet$};
				\draw (6,-2.5) node {$\bullet$};
				\draw (7,-2.5) node {$\bullet$};
				\draw (6,-3) node {$\bullet$};
				\draw (7,-3) node {$\bullet$};
				\draw[black,] (6,-1.5) -- (7,-1.5);
				\draw[black,] (6,-1.5) -- (6,-2.5);
				\draw[black,] (6,-2.5) -- (7,-2.5);
				\draw[black,] (7,-1.5) -- (7,-2.5);
				\draw[black,] (6,-1) -- (6,-1.5);
				\draw[black,] (6,-2.5) -- (6,-3);
				\draw[black,] (7,-1) -- (7,-1.5);
				\draw[black,] (7,-2.5) -- (7,-3);
				
				\draw (6.5,-4) node {$C_{4}^{+}$};
				\draw (9,-1) node {$\bullet$};
				\draw (10,-1) node {$\bullet$};
				\draw (9,-1.5) node {$\bullet$};
				\draw (10,-1.5) node {$\bullet$};
				\draw (9,-2.5) node {$\bullet$};
				\draw (10,-2.5) node {$\bullet$};
				\draw (9,-3) node {$\bullet$};
				\draw (10,-3) node {$\bullet$};
				\draw[black,] (9,-1.5) -- (10,-1.5);
				\draw[black,] (9,-1.5) -- (9,-2.5);
				\draw[black,] (9,-2.5) -- (10,-2.5);
				\draw[black,] (10,-1.5) -- (10,-2.5);
				\draw[black,] (9,-1.5) -- (10,-2.5);
				\draw[black,] (10,-1.5) -- (9,-2.5);
				\draw[black,] (9,-1) -- (9,-1.5);
				\draw[black,] (9,-2.5) -- (9,-3);
				\draw[black,] (10,-1) -- (10,-1.5);
				\draw[black,] (10,-2.5) -- (10,-3);
				
				\draw (9.5,-4) node {$K_{4}^{+}$};
				
			\end{tikzpicture}
		\end{minipage}
		\caption{\em Graphs $Q, L(Q), C_{6}^{+}, K_{3,3}^{+}$, $C_{4}^{+},$ and $K_{4}^{+}$.}
		\label{Figure 1}
	\end{figure}
	
	\begin{lem}\label{Lemma 2.4}
		{\em If $\psi\in Aut(G^{+})$ such that $\psi(u)=u$ for all $u\in V(G)$, then $\psi=id_{G^{+}}$.}
	\end{lem}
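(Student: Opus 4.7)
The plan is to exploit the very simple local structure of $G^{+}$: each added vertex $u_i$ is a pendant attached uniquely to the corresponding old vertex $v_i$, so once $\psi$ fixes every $v_i$, it has no freedom left to move any $u_i$.

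First I would record the description of the neighborhoods. For every $v_i \in V(G)$, the neighborhood of $v_i$ in $G^{+}$ is
\[
N_{G^{+}}(v_i) \;=\; N_G(v_i) \,\cup\, \{u_i\},
\]
by the construction in Definition~\ref{Definition 2.1}, and every vertex of $N_G(v_i)$ lies in $V(G)$. In particular $N_{G^+}(v_i) \setminus \{u_i\} \subseteq V(G)$.

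Next I would argue that $\psi$ fixes each $u_i$. Since $\psi \in Aut(G^{+})$ and $\psi(v_i)=v_i$, the map $\psi$ restricts to a bijection from $N_{G^{+}}(v_i)$ onto $N_{G^{+}}(\psi(v_i)) = N_{G^{+}}(v_i)$. By hypothesis $\psi$ fixes every element of $V(G)$, and in particular every element of $N_G(v_i) \subseteq V(G)$. The only element of $N_{G^{+}}(v_i)$ not already accounted for is $u_i$, so the pigeonhole forces $\psi(u_i)=u_i$. Running this argument for $i=1,\ldots,p$ shows that $\psi$ fixes every added vertex as well, and since $\psi$ already fixes $V(G)$ we conclude $\psi = id_{G^{+}}$.

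There is really no hard step here; the only place to be mildly careful is the possibility that $v_i$ is isolated in $G$, in which case $N_{G^{+}}(v_i)=\{u_i\}$ and the conclusion $\psi(u_i)=u_i$ is immediate, so the argument degenerates gracefully rather than failing. Thus the whole proof is a one-line neighborhood-counting observation.
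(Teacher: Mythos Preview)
Your proof is correct and follows essentially the same idea as the paper's: both use that an automorphism fixing $v_i$ must, by adjacency preservation and the pendant structure of $G^{+}$, also fix $u_i$. The only cosmetic difference is that the paper argues from the unique neighbor of $u_p$ while you argue from the neighborhood of $v_i$, but these are two sides of the same one-line observation.
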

	
	\begin{proof}
		Let $u_{p}\in V(G^{+})\backslash V(G)$. If $\psi(u_{p})=u_{q}$ for some $u_{q}\in  V(G^{+})\backslash (V(G)\cup \{u_{p}\})$, then $\psi(v_{p})=v_{q}$ since automorphisms  preserve adjacency, which is a contradiction to the given assumption. 
	\end{proof}
	
	\begin{lem}\label{Lemma 2.5}
		{\em If $G\in \{C_{4}, C_{6}, K_{4}, K_{3,3}\}$,
			then $\chi_{D}'(G^{+})=\Delta(G)+2$.}
	\end{lem}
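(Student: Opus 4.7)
My plan is to treat the upper and lower bounds separately. For the upper bound, each endline graph $G^+$ with $G \in \{C_4, C_6, K_4, K_{3,3}\}$ contains pendant vertices and hence is not itself one of the exceptional graphs listed in Fact 2.3(3); applying that fact to $G^+$ yields $\chi_D'(G^+) \leq \Delta(G^+) + 1 = \Delta(G) + 2$, giving the upper bound at once.

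For the lower bound, I would show that every proper edge coloring $c : E(G^+) \to \{1, \ldots, \Delta(G) + 1\}$ is preserved by a non-trivial automorphism of $G^+$. The key structural observation is that $G$ is $\Delta(G)$-regular, so $\deg_{G^+}(v_i) = \Delta(G) + 1$ and all $\Delta(G)+1$ colors must appear at $v_i$; in particular, the pendant edge $p_i = \{v_i, u_i\}$ receives the unique color missing from the $\Delta(G)$ $G$-edges at $v_i$, so $c(p_i)$ is a function of $c|_{E(G)}$ at $v_i$. I would then split according to whether $c|_{E(G)}$ uses $\Delta(G) + 1$ colors or only $\Delta(G)$ colors. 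In the first case $c|_{E(G)}$ is a proper $(\Delta(G)+1)$-edge coloring of $G$, and Fact 2.3(3) applied to $G$ itself supplies a non-trivial $\psi \in Aut(G)$ preserving $c|_{E(G)}$. In the second case the $G$-edges at each $v_i$ necessarily exhaust all $\Delta(G)$ of the colors used, so every pendant receives the same extra color, and it remains to verify that the resulting proper $\Delta(G)$-edge coloring of $G$ admits a non-trivial preserving automorphism. In either case, the unique lift $\tilde\psi \in Aut(G^+)$ defined by $\tilde\psi(u_i) = u_{\psi(i)}$ is non-trivial, and since $c(p_i)$ depends only on the multiset of colors on the $G$-edges at $v_i$, the lift $\tilde\psi$ automatically preserves the pendant colors as well.

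The main obstacle is the second case, which I would dispatch by inspection. For $C_4$ and $C_6$ the proper $2$-edge coloring is the alternating one, preserved by a half-turn rotation; for $K_4$ the proper $3$-edge coloring is the unique $1$-factorization into three perfect matchings, preserved by the Klein four-group $V_4 \subset S_4 = Aut(K_4)$; and for $K_{3,3}$ a proper $3$-edge coloring corresponds to a Latin square of order $3$, and since every such square is isotopic to the cyclic group table, there is a non-trivial color-preserving automorphism, for instance a cyclic shift of one part of $K_{3,3}$ paired with the inverse cyclic shift of the other. Combining both cases with the upper bound gives the equality $\chi_D'(G^+) = \Delta(G) + 2$.
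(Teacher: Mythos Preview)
Your proof is correct. The lower-bound argument is essentially the same as the paper's: both exploit that $G$ is $\Delta(G)$-regular, so in any proper $(\Delta(G)+1)$-edge-coloring of $G^{+}$ the pendant color at $v_i$ is forced by $c|_{E(G)}$, and hence any $c|_{E(G)}$-preserving automorphism of $G$ lifts to a $c$-preserving automorphism of $G^{+}$. The paper phrases this contrapositively (if $c$ were distinguishing on $G^{+}$ then $c|_{E(G)}$ would be distinguishing on $G$, contradicting $\chi_D'(G)=\Delta(G)+2$), but the content is identical. Note, however, that your case split is unnecessary: the statement $\chi_D'(G)=\Delta(G)+2$ from Fact~2.3(3) already guarantees that \emph{every} proper edge coloring of $G$ with at most $\Delta(G)+1$ colors---in particular any $\Delta(G)$-coloring---admits a non-trivial preserving automorphism, so the explicit inspection of $\Delta(G)$-colorings of $C_4$, $C_6$, $K_4$, $K_{3,3}$ can be dropped.

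Where your argument genuinely differs from the paper is the upper bound. The paper builds an explicit proper distinguishing $(\Delta(G)+2)$-edge-coloring of $G^{+}$ via a Hamiltonian cycle of $G$; you instead observe that $G^{+}$ has pendant vertices and therefore is not among the four exceptional graphs, so Fact~2.3(3) applied to $G^{+}$ gives $\chi_D'(G^{+})\le\Delta(G^{+})+1=\Delta(G)+2$ immediately. Your route is shorter and avoids the ad~hoc construction, at the cost of invoking the full strength of the Kalinowski--Pil\'{s}niak theorem a second time; the paper's construction is self-contained but longer.
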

	
	\begin{proof}
		Fix $G\in \{C_{4}, C_{6}, K_{4}, K_{3,3}\}$.
		By Fact \ref{Fact 2.3}(3), $\chi_{D}'(G)=\Delta(G)+2$. We use the property that all vertices of $G$ has same degree, i.e. $\Delta(G)$, to claim that $\chi_{D}'(G^{+})\geq\Delta(G)+2$. 
		For the sake of contradiction, assume that $f:E(G^{+})\rightarrow \{1,...,\Delta(G)+1\}$ is a proper distinguishing edge coloring of $G^{+}$. We claim that $g=f\restriction E(G)$ is a proper distinguishing edge coloring of $G$ which contradicts the fact that $\chi_{D}'(G)=\Delta(G)+2$. Clearly, $g$ is a proper coloring. It is enough to show that $g$ is a distinguishing coloring of $G$. Let $\alpha$ be an automorphism of $G$ preserving $g$. 
		Define the mapping $\pi_{\alpha}: V(G^{+}) \rightarrow V(G^{+})$ by the following rule:
		
		\begin{align*}
			\pi_{\alpha}(w) =
			\begin{cases}
				\alpha(v_{i}) & \text{if } w=v_{i}\in V(G)\\
				u_{j} & \text{if } w=u_{i}\in V(G^{+})\backslash V(G), v_{j}= \alpha(v_{i}).
			\end{cases}
		\end{align*}
		
		Then $\pi_{\alpha}\in Aut(G^{+})$, where $\pi_{\alpha}(V(G)) = V(G)$ and $\pi_{\alpha}(V(G^{+})\backslash V(G)) = V(G^{+})\backslash V(G)$. 
		
		\begin{claim}\label{Claim 2.6}
			{\em $\pi_{\alpha}$ is the trivial automorphism of $G^{+}$.}
		\end{claim}
		\begin{proof}
			If $e\in E(G^{+})$, then either $e\in E(G)$ or $e$ is an endline edge of $G^{+}$. We show that $f(\pi_{\alpha}(e)) = f(e)$ for all edges $e\in E(G^{+})$.
			Fix $\{x,y\}\in E(G)$.
			Since $g=f\restriction E(G)$ and $\alpha$ preserves $g$, we have $f(\pi_{\alpha}(\{x,y\}))= f(\{\pi_{\alpha}(x), \pi_{\alpha}(y)\}) = f(\{\alpha(x), \alpha(y)\}) = f(\alpha(\{x,y\}))= g(\alpha(\{x,y\})) = g(\{x,y\})=f(\{x,y\})$.
			Let $e=\{u_{i}, v_{i}\}$ be an endline edge of $G^{+}$. 
			
			\begin{figure}[!ht]
				\centering
				\begin{minipage}{\textwidth}
					\centering
					\begin{tikzpicture}[scale=0.65]
						\draw[black,] (-8,0) -- (-5,0);
						\draw[black,] (-5,0) -- (-3,-1);
						\draw[black,] (-5,0) -- (-3,-0.3);
						\draw[black,] (-5,0) -- (-3,1);
						
						\draw[black,] (5,0) -- (8,0);
						\draw[black,] (5,0) -- (3,-1);
						\draw[black,] (5,0) -- (3,-0.3);
						\draw[black,] (5,0) -- (3,1);
						
						\draw (-3,-1) node {$\bullet$};
						\draw (-2.3,-1) node {$a_{1}$};
						\draw (-3,-0.3) node {$\bullet$};
						\draw (-2.3,-0.3) node {$a_{2}$};
						\draw (-3,1) node {$\bullet$};
						\draw (-2,1) node {$a_{\Delta(G)}$};
						\draw (-5,0) node {$\bullet$};
						\draw (-5,0.5) node {$v_{i}$};
						\draw (-8,0) node {$\bullet$};
						\draw (-8,0.5) node {$u_{i}$};
						
						\draw (-2.3,0.2) node {$.$};
						\draw (-2.3,0.4) node {$.$};
						\draw (-2.3,0.6) node {$.$};
						
						\draw (0,0) node {$...$};
						
						\draw (3,-1) node {$\bullet$};
						\draw (2,-1) node {$\pi_{\alpha}(a_{1})$};
						\draw (3,-0.3) node {$\bullet$};
						\draw (2,-0.3) node {$\pi_{\alpha}(a_{2})$};
						\draw (3,1) node {$\bullet$};
						\draw (1.5,1) node {$\pi_{\alpha}(a_{\Delta(G)})$};
						\draw (5,0) node {$\bullet$};
						\draw (5,0.6) node {$\pi_{\alpha}(v_{i})$};
						\draw (8,0) node {$\bullet$};
						\draw (8,0.6) node {$\pi_{\alpha}(u_{i})$};
						
						\draw (2.3,0.2) node {$.$};
						\draw (2.3,0.4) node {$.$};
						\draw (2.3,0.6) node {$.$};
						\draw[rounded corners=15pt] (-6,-1.7) rectangle (6,1.7);
						
						\draw (6.2,1.7) node {$G$};
						
					\end{tikzpicture}
				\end{minipage}
				\caption{\em Graph $G^{+}$.}
				\label{Figure 2}
			\end{figure}
			
			Let $N_{G}(v_{i})=\{a_{1},..., a_{\Delta(G)}\}$ be the open neighborhood of $v_{i}$ in $G$. Since all vertices of $G$ have the same degree, i.e., $\Delta(G)$, we have $f(\pi_{\alpha}(e))=f(e)$ for the following reasons (see Figure \ref{Figure 2}):
			
			\begin{itemize}
				\item $f(\{v_{i}, a_{j}\})=f(\pi_{\alpha}(\{v_{i}, a_{j}\}))$ for all $1\leq j\leq \Delta(G)$,
				\item the range of $f$ is $\{1,...,\Delta(G)+1\}$,
				\item $f(\{v_{i}, u_{i}\}) \not\in \{f(\{v_{i}, a_{1}\}),...,f(\{v_{i}, a_{\Delta(G)}\})\}$, and
				\item $f(\pi_{\alpha}(\{v_{i}, u_{i}\})) \not\in \{f(\pi_{\alpha}(\{v_{i}, a_{1}\})),...,f(\pi_{\alpha}(\{v_{i}, a_{\Delta(G)}\}))\}$.
			\end{itemize}
			Thus, $\pi_{\alpha}$ is an automorphism of $G^{+}$ preserving $f$, and $\pi_{\alpha}=id_{G^{+}}$ since $f$ is a distinguishing coloring of $G^{+}$.
		\end{proof}

		By Claim \ref{Claim 2.6}, we have $\alpha=id_{G}$. Thus, $\chi_{D}'(G^{+})\geq\Delta(G)+2$.
		
		\begin{claim}\label{Claim 2.7}
			{\em $\chi_{D}'(G^{+})\leq\Delta(G)+2$.}
		\end{claim}
		
		\begin{proof}
			Fix $v_{i}\in V(G)$. Define an edge coloring $c: E(G^+) \rightarrow \{1,..., \Delta(G)+2\}$ as follows: 
			\begin{itemize}
				\item Pick any hamiltonian cycle $\{z_{1},...,z_{p}\}$ of $G$ such that $z_{1}=v_{i}$ and let 
				$c(\{z_{2i-1},z_{2i}\})=3$, $c(\{z_{2i},z_{2i+1}\})=4$, i.e., color the edges alternatively with 3 and 4.
				
				\item  Let $y_{i}$ be the vertex adjacent to $z_{i}$ that is connected by an endline edge. 
				Define, $c(\{z_{1}, y_{1}\})=1$, and $c(\{z_{j}, y_{j}\})=2$ for all $2\leq j\leq p$, i.e., we color the endline edge connected to $x$ with $1$ and color all of the remaining endline edges with $2$.
				
				\item If $G\in \{K_4, K_{3,3}\}$, then $\Delta(G)=3$. We color the remaining edges of $G$ with $5$. 
			\end{itemize}
			
			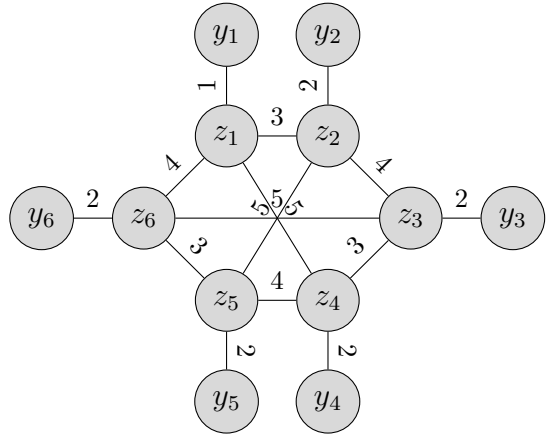
\begin{figure}[!ht]
				\centering
				\begin{minipage}{\textwidth}
					\centering
					\begin{tikzpicture}[scale=0.4,
						node distance = 5mm and 5mm,
						V/.style = {circle, draw, fill=gray!30},
						every edge quotes/.style = {auto, font=\footnotesize, sloped}
						]
						\begin{scope}[nodes=V]
							\node (1)   {$z_6$};
							\node (2) [above right=of 1]    {$z_1$};
							\node (3) [right=of 2]          {$z_2$};
							\node (4) [below right=of 3]    {$z_3$};
							\node (5) [below  left=of 4]    {$z_4$};
							\node (6) [left= of 5]          {$z_5$};
							\node (7) [right=of 4]          {$y_3$};
							\node (8) [below=of 5]          {$y_4$};
							\node (9) [below=of 6]          {$y_5$};
							\node (10) [left=of 1]  {$y_6$};
							\node (11) [above=of 2]  {$y_1$};
							\node (12) [above=of 3]  {$y_2$};
							
						\end{scope}
						\draw   (1)  edge["4"] (2)
						(2)  edge["3"] (3)
						(3)  edge["4"] (4)
						(4)  edge["3"] (5)
						(5)  edge["4"] (6)
						(6)  edge["3"] (1)
						(4)  edge["2"] (7)
						(5)  edge["2"] (8)
						(6)  edge["2"] (9)
						(1)  edge["2"] (10)
						(2)  edge["1"] (11)
						(2)  edge["5"] (5)
						(3)  edge["5"] (6)
						(1)  edge["5"] (4)
						(3)  edge["2"] (12);
					\end{tikzpicture}
				\end{minipage}
				\caption{\em 
					An edge coloring $c: E(K_{3,3}^{+}) \rightarrow \{1,..., \Delta(K_{3,3})+2\}$ of $K_{3,3}^{+}$.}
				\label{Figure 3}
			\end{figure}
			Clearly, $c$ is a proper edge coloring (see Figure \ref{Figure 3}). We show that $c$ is a distinguishing edge coloring.
			Let $\psi$ be a $c$-preserving automorphism of $G^{+}$. 
			Since $\{z_{1}, y_{1}\}$ is colored differently than any other edges in $G^{+}$, $\psi$ fixes $z_{1}$ and $y_{1}$. Moreover, $\psi$ fixes $x$ for all $x\in N_{G}(z_{1})$, since the edges $\{z_{1}, x\}$ are colored differently. 
			For $z_{p}\in V(G)\backslash \{z_{1}, z_{2}\}$, let $P_{z_{1}, z_{2}, z_{p}}$ denote the path connecting  $z_{1}, z_{2},$ and $z_{p}$.
			Now, $\psi$ fixes all the vertices of $G$ since
			for any $z_{p}, z_{q}\in V(G)\backslash \{z_{1}, z_{2}\}$ such that $z_{p}\neq z_{q}$, the length of $P_{z_{1}, z_{2}, z_{p}}$ is different than the length of $P_{z_{1}, z_{2}, z_{q}}$. By Lemma \ref{Lemma 2.4}, we have $\psi=id_{G^{+}}$.
		\end{proof}
		This concludes the proof of the present Lemma.
	\end{proof}
	
	\begin{thm}\label{Theorem 2.8}
		{\em If $G$ is a connected graph of order $n\geq 3$. Then $\chi_{D}(M(G))=\Delta(G)+1$ if $G\not\in\{C_{4}, K_{4}, C_{6}, K_{3,3}\}$. Otherwise, $\chi_{D}(M(G))=\Delta(G)+2$.}
	\end{thm}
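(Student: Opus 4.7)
The plan is to reduce the computation of $\chi_{D}(M(G))$ to that of $\chi_{D}'(G^{+})$, and then dispose of the four exceptional graphs separately via Lemma \ref{Lemma 2.5}. By Fact \ref{Fact 2.3}(1), $M(G)\cong L(G^{+})$, so $\chi_{D}(M(G))=\chi_{D}(L(G^{+}))$. To invoke Fact \ref{Fact 2.3}(2) and identify this with $\chi_{D}'(G^{+})$, I must verify $G^{+}\not\in\{Q,L(Q)\}$; this is immediate since $G^{+}$ has exactly $n\geq 3$ pendant vertices $u_{1},\ldots,u_{n}$, whereas $Q=K_{4}$ has none and $L(Q)$ has only one. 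Hence $\chi_{D}(M(G))=\chi_{D}'(G^{+})$, and it suffices to compute the right-hand side.

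If $G\in\{C_{4},K_{4},C_{6},K_{3,3}\}$, Lemma \ref{Lemma 2.5} already gives $\chi_{D}'(G^{+})=\Delta(G)+2$, matching the claimed value. For the remaining graphs I aim to show $\chi_{D}'(G^{+})=\Delta(G)+1$. The lower bound is free: every $v_{i}\in V(G)$ gains one endline edge, so $\Delta(G^{+})=\Delta(G)+1$, and $\chi_{D}'(G^{+})\geq \chi'(G^{+})\geq \Delta(G^{+})=\Delta(G)+1$.

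For the matching upper bound, I use Fact \ref{Fact 2.3}(3): since $G\not\in\{C_{4},K_{4},C_{6},K_{3,3}\}$, there is a proper distinguishing edge coloring $g:E(G)\to\{1,\ldots,\Delta(G)+1\}$ of $G$. I extend $g$ to $c:E(G^{+})\to\{1,\ldots,\Delta(G)+1\}$ by coloring each endline edge $\{v_{i},u_{i}\}$ with some color missing at $v_{i}$ under $g$; such a color exists because $\deg_{G}(v_{i})\leq\Delta(G)$ leaves at least one free color among the $\Delta(G)+1$ available. By construction $c$ is proper. To see that $c$ is distinguishing, let $\psi\in Aut(G^{+})$ preserve $c$. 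Since $\psi$ must map pendant vertices to pendant vertices, it preserves the partition $V(G^{+})=V(G)\sqcup\{u_{1},\ldots,u_{n}\}$, so $\alpha:=\psi\restriction V(G)$ is an automorphism of $G$ that preserves $g$; because $g$ is distinguishing, $\alpha=id_{G}$, and Lemma \ref{Lemma 2.4} then forces $\psi=id_{G^{+}}$. This yields $\chi_{D}'(G^{+})\leq\Delta(G)+1$, completing the argument.

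The proof is essentially a clean assembly of Facts \ref{Fact 2.3}(1)--(3) with Lemmas \ref{Lemma 2.4} and \ref{Lemma 2.5}; the only points requiring genuine care are confirming the Alikhani--Soltani hypothesis $G^{+}\not\in\{Q,L(Q)\}$ and verifying that the endline-edge extension of $g$ remains distinguishing, both of which follow from the pendant-vertex structure of $G^{+}$.
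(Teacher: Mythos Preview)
Your proof is correct and follows essentially the same route as the paper's: reduce $\chi_{D}(M(G))$ to $\chi_{D}'(G^{+})$ via Fact~\ref{Fact 2.3}(1)--(2), handle the four exceptions by Lemma~\ref{Lemma 2.5}, and for the remaining graphs extend a proper distinguishing edge coloring of $G$ to $G^{+}$ by giving each endline edge a color missing at its anchor vertex, then invoke Lemma~\ref{Lemma 2.4}. The paper splits the non-exceptional case into two subcases ($\chi_{D}'(G)=\Delta(G)$ versus $\Delta(G)+1$) and leaves the lower bound $\chi_{D}'(G^{+})\geq\Delta(G)+1$ implicit, whereas you handle both subcases uniformly and state the lower bound explicitly; one small slip is that $Q$ is $K_{4}$ minus an edge rather than $K_{4}$ itself, but your pendant-vertex count still correctly excludes it.
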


	\begin{proof}
		By Fact \ref{Fact 2.3} (1), we have $\chi_{D}(M(G))$=
		$\chi_{D}(L(G^{+}))$. Since $G^{+}\not\in \{Q, L(Q)\}$ as $G^{+}$ has $n$ vertices of degree one (see Figure \ref{Figure 1}), we have $\chi_{D}(L(G^{+}))=\chi_{D}'(G^{+})$ by Fact \ref{Fact 2.3} (2). We show that $\chi_{D}'(G^{+})=\Delta(G)+1$ if $G\not\in\{C_{4}, K_{4}, C_{6}, K_{3,3}\}$, and $\chi_{D}'(G^{+})=\Delta(G)+2$ otherwise.  
		
		Case (i): $G\in \{C_{4}, K_{4}, C_{6}, K_{3,3}\}$. By Lemma \ref{Lemma 2.5}, we have $\chi_{D}'(G^{+})=\Delta(G)+2$.

		Case (ii): $G\not\in \{C_{4}, K_{4}, C_{6}, K_{3,3}\}$. Then, by Fact \ref{Fact 2.3} (3), either $\chi_{D}'(G)=\Delta(G)$ or $\chi_{D}'(G)=\Delta(G)+1$. Let $E$ be the set of all endline edges of $G^{+}$.
		
		Subcase (A): $\chi_{D}'(G)=\Delta(G)$. 
		Let $g:E(G)\rightarrow \{1,...,\Delta(G)\}$ be a proper distinguishing edge coloring of $G$. Define the mapping $g':E(G^{+})\rightarrow \{1,...,\Delta(G)+1\}$ as follows:
		
		\begin{center}
			$g'(e) =
			\begin{cases} 
				\Delta(G)+1 & \text{if}\, e\in E, \\
				
				g(e) & \text{if} \, e\not\in E.
			\end{cases}$
		\end{center}
		
		We show that $g'$ is a distinguishing proper coloring of $G^{+}$. Clearly, $g'$ is proper. If $\psi$ is an automorphism of $G^{+}$ preserving $g'$, then $\psi$ maps edges from $E$ to $E$. So, $\psi\restriction G$ is an automorphism of $G$. 
		Since $g$ is a distinguishing edge coloring, $\psi\restriction G=id_{G}$. 
		Thus, $\psi=id_{G^{+}}$ by Lemma \ref{Lemma 2.4}.

		Subcase (B): $\chi_{D}'(G)=\Delta(G)+1$. Let $g:E(G)\rightarrow \{1,...,\Delta(G)+1\}$ be a proper distinguishing edge coloring. For each $v\in V(G)$, let $c_{v}$ be a color from $\{1,...,\Delta(G)+1\}\backslash \{g(e):e=\{v,y\}$ for some $y\in N_{G}(v)\}$. Define $g':E(G^{+})\rightarrow \{1,...,\Delta(G)+1\}$ as follows:
		
		\begin{center}
			$g'(e) =
			\begin{cases} 
				c_{v} & \text{if}\, e\in E \text{ and } v \text{ is incident to } e, \\
				
				g(e) & \text{if} \, e\not\in E,
			\end{cases}$
		\end{center}
		
		Clearly, $g'$ is proper, and if $f$ is an automorphism of $G^{+}$ preserving $g'$, then $f$ maps edges from $E$ to $E$ since the vertices of $E$ are the only vertices with degree one in $G^{+}$.
		Similar to Subcase (A), one may verify that $g'$ is a proper distinguishing edge coloring of $G^{+}$ by applying Lemma \ref{Lemma 2.4}.
	\end{proof}
	
	\section{Distinguishing number of subdivision graphs} 
	
	\begin{defn}\label{Definition 3.1}
		Let $G = (V(G), E(G))$ be a connected graph. The {\em subdivision graph $S(G)$} of $G$ is
		a bipartite graph with the vertex set $V(G) \cup E(G)$ in which vertices $v \in V(G)$ and $e \in E(G)$ are adjacent
		if and only if the vertex $v$ is incident on the edge $e$ in the graph $G$. In other words, each edge $e = \{u, v\} \in E(G)$ is deleted and replaced by two edges $\{u, w\}$ and $\{w, v\}$ with the new
		vertex $w = \{u,v\}$.
	\end{defn}
	
	\begin{fact}\label{Fact 3.2}
		{\em Let $G$ be a connected graph
			of order $n\geq 3$. The following holds:
			\begin{enumerate}
				\item {(Kalinowski, Pil\'{s}niak, and Wo\'{z}niak; \cite[Theorem 2.2]{KPW2016})} $D''(G)\leq \lceil \sqrt{\Delta(G)}\rceil$.
				\item {(Mirafzal; \cite[Theorem 4]{Mir2024})}
				If $G \not\cong C_{n}$ then $Aut(S(G)) \cong Aut(G)$.
			\end{enumerate}
			
		}
	\end{fact}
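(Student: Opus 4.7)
Fact 3.2 collects two cited results, so I will outline the strategies I would use to recover each.

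\textbf{For part (1),} the goal is a total coloring (of vertices and edges) with $k := \lceil\sqrt{\Delta(G)}\rceil$ colors that is preserved only by $id_G$, in the style of Kalinowski--Pil\'sniak--Wo\'zniak. The natural idea is to give every edge $e = \{u,v\}$ a \emph{local label} at each endpoint, encoded as a pair consisting of the color of $e$ and the color of the opposite endpoint. Since there are $k^{2} \geq \Delta$ such pairs, at every vertex the incident edges can in principle carry pairwise distinct local labels, which locally identifies the neighbours. First I would fix a BFS ordering of $V(G)$ from some root $r$, color $r$ with a distinguishing vertex color, and then proceed layer by layer, at each step choosing the color of the new vertex together with the colors of the tree edges reaching it so that (a) the local labels at each processed vertex are pairwise distinct, and (b) the new vertex is set apart from the other vertices in its orbit under $Aut(G)$. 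An induction along the BFS layers then forces any color-preserving automorphism to fix every vertex. The technical heart, and the main obstacle, is the counting argument that $k = \lceil\sqrt{\Delta}\rceil$ suffices to break every orbit simultaneously at every BFS layer.

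\textbf{For part (2),} I would write down the natural homomorphism $\Phi : Aut(G) \to Aut(S(G))$ that extends $\phi \in Aut(G)$ by $\Phi(\phi)(w_{\{u,v\}}) = w_{\{\phi(u),\phi(v)\}}$ on every subdivision vertex $w_{\{u,v\}}$. A routine check shows $\Phi$ is an injective group homomorphism, so the entire content is surjectivity. I would first prove that, for $G \not\cong C_n$, every $\psi \in Aut(S(G))$ preserves the bipartition $V(G) \sqcup E(G)$: since $S(G)$ is connected and bipartite, any automorphism either preserves or swaps the two parts, and a swap would force both parts to have the same size and every original vertex to have $S(G)$-degree $2$, making $G$ a $2$-regular connected graph and therefore $G \cong C_n$. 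Once bipartition-preservation is in hand, $\psi\!\restriction\!V(G)$ is a bijection of $V(G)$, and it lies in $Aut(G)$ because two vertices of $V(G)$ are $G$-adjacent iff they share a common neighbour in $S(G)$. The action on subdivision vertices is then forced, since each $w_{\{u,v\}}$ is the unique common neighbour of $u$ and $v$ in $S(G)$, yielding $\Phi(\psi\!\restriction\!V(G)) = \psi$. The delicate step is precisely this bipartition-preservation argument, which is what forces the exclusion of $C_n$.
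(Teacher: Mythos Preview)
The paper does not prove Fact~\ref{Fact 3.2} at all: it is stated as a pair of cited results (from \cite{KPW2016} and \cite{Mir2024}) and used as a black box, so there is no ``paper's own proof'' to compare against. That said, your sketches line up well with the original arguments.

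For part~(2) your outline is essentially a complete proof and is exactly Mirafzal's argument, which the paper itself invokes in pieces (Lemmas~\ref{Lemma 4.2}--\ref{Lemma 4.4} and the proof of Theorem~\ref{Theorem 3.3}). The bipartition-preservation step is correct: a swap would send every $v\in V(G)$ to a subdivision vertex of $S(G)$-degree~$2$, forcing $G$ to be $2$-regular and hence a cycle. One small remark: the ``same size'' observation is redundant, since the degree argument alone already yields $G\cong C_n$; and you should note that the uniqueness of the common neighbour $w_{\{u,v\}}$ uses that $G$ is simple.

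For part~(1) your outline captures the right mechanism---the pair (edge color, far-endpoint color) gives $k^2\geq\Delta$ local labels, and one propagates along a BFS tree---and this is indeed the skeleton of the Kalinowski--Pil\'sniak--Wo\'zniak proof. You are right that the counting/orbit-breaking at each layer is where the real work lies; as written this is a plan rather than a proof, but since the paper only cites the result, that is appropriate here.
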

	
	\begin{thm}\label{Theorem 3.3}
		{\em Let $G$ be a connected graph
			of order $n\geq 3$. Then $D''(G) = D(S(G))$.
		}
	\end{thm}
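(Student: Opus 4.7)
The plan is to identify vertex colorings of $S(G)$ with total colorings of $G$, and to match the two notions of ``distinguishing'' via a concrete embedding of $Aut(G)$ into $Aut(S(G))$. Writing each edge $e=\{u,v\}$ of $G$ as a path of length two through its subdivision vertex $w_e$ in $S(G)$, any $r$-vertex-coloring $c$ of $S(G)$ induces a total coloring $\hat c$ of $G$ by $\hat c(v)=c(v)$ for $v\in V(G)$ and $\hat c(e)=c(w_e)$ for $e\in E(G)$, and the assignment $c\mapsto\hat c$ is a bijection between $r$-vertex-colorings of $S(G)$ and $r$-total-colorings of $G$. For each $\alpha\in Aut(G)$, I would define its lift $\tilde\alpha\in Aut(S(G))$ by $\tilde\alpha(v)=\alpha(v)$ on $V(G)$ and $\tilde\alpha(w_e)=w_{\alpha(e)}$ on the subdivision vertices, then verify that $\alpha\mapsto\tilde\alpha$ is an injective group homomorphism and, crucially, that $\tilde\alpha$ preserves $c$ if and only if $\alpha$ preserves $\hat c$.

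The easy direction $D''(G)\leq D(S(G))$ follows immediately from this dictionary: take a distinguishing vertex coloring $c$ of $S(G)$ with $D(S(G))$ colors; if some $\alpha\in Aut(G)$ preserved $\hat c$, then $\tilde\alpha$ would preserve $c$, forcing $\tilde\alpha=id_{S(G)}$ and hence $\alpha=id_G$. For the reverse inequality $D(S(G))\leq D''(G)$ in the non-cycle case $G\not\cong C_n$, I would invoke Fact \ref{Fact 3.2}(2) to obtain $|Aut(S(G))|=|Aut(G)|$; combined with the injectivity of the lifting and finiteness of the groups, this forces $\alpha\mapsto\tilde\alpha$ to be surjective, so every $\sigma\in Aut(S(G))$ equals $\tilde\alpha$ for a unique $\alpha\in Aut(G)$. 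An optimal total distinguishing coloring $\hat c$ of $G$ then produces a vertex coloring $c$ on $S(G)$ preserved only by $id_{S(G)}$, completing this direction.

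The cycle case $G\cong C_n$ needs a separate handling because $S(C_n)\cong C_{2n}$ carries extra automorphisms swapping the two bipartition classes and Fact \ref{Fact 3.2}(2) does not apply. Since $n\geq 3$ gives $2n\geq 6$, we have $D(S(C_n))=D(C_{2n})=2$ by the classical formula for the distinguishing number of a cycle; on the other side, Fact \ref{Fact 3.2}(1) yields $D''(C_n)\leq\lceil\sqrt{\Delta(C_n)}\rceil=2$, and $D''(C_n)\geq 2$ because $Aut(C_n)$ is non-trivial, so the equality $D''(C_n)=D(S(C_n))$ follows. The main obstacle is confirming, in the non-cycle case, that Mirafzal's abstract isomorphism $Aut(G)\cong Aut(S(G))$ actually coincides with the explicit lifting $\alpha\mapsto\tilde\alpha$ rather than some other isomorphism; once this identification is in place, the preservation dictionary between $c$ and $\hat c$ closes the argument, and the cycle case reduces to a bounded parameter check.
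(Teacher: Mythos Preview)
Your proposal is correct and follows essentially the same route as the paper: the same bijection between vertex colorings of $S(G)$ and total colorings of $G$, the same lift $\tilde\alpha$ (the paper calls it $\pi_\alpha$), and the same preservation equivalence drive both inequalities, with the cycle case handled separately by computing $D(C_{2n})=2=D''(C_n)$. The only cosmetic difference is how surjectivity of the lifting is obtained in the non-cycle case: the paper cites the \emph{proof} of Mirafzal's theorem, which directly shows $\{\pi_\alpha:\alpha\in Aut(G)\}=Aut(S(G))$, whereas you deduce it from the \emph{statement} of the isomorphism via a finiteness/counting argument (injective map between finite groups of equal order is surjective); both are valid and the ``main obstacle'' you flag is thereby resolved.
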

	
	\begin{proof}
		If $G\cong C_{n}$ where $n\geq 3$ is any natural number,
		then $D(S(G))= D(S(C_{n}))=D(C_{2n})=2=D''(C_{n})$. 
		For the rest of the proof, we assume that $G\not\cong C_{n}$ for any natural number $n$.
		
		\begin{claim}\label{Claim 3.4}
			$D''(G) \leq D(S(G))$. 
		\end{claim}
		
		\begin{proof}
			Let $f : V(S(G)) \rightarrow \{1, . . . , D(S(G))\}$ be a distinguishing vertex coloring of $S(G)$. Define, $g:V(G)\cup E(G)\rightarrow \{1, . . . , D(S(G))\}$ so that $g(x)=f(x)$ for all $x\in V(G)$ and $g(\{x,y\})=f(\{x,y\})$ for all $\{x,y\}\in E(G)=V(S(G))\backslash V(G)$. 
			We prove that $g$ is a total distinguishing coloring of $G$. Let $\alpha$ be a $g$-preserving automorphism of $G$.  
			Define the mapping $\pi_{\alpha}: V(S(G)) \rightarrow V(S(G))$ as follows:
			\begin{align*}
				\pi_{\alpha}(w) =
				\begin{cases}
					\alpha(w) & \text{if } w\in V(G)\\
					\{\alpha(x), \alpha(y)\} & \text{if } w\in V(S(G))\backslash V(G), w=\{x,y\}, x,y\in V(G).
				\end{cases}
			\end{align*}
			Then $\pi_{\alpha}\in Aut(S(G))$, where $\pi_{\alpha}(V(G)) = V(G)$ and $\pi_{\alpha}(V(S(G))\backslash V(G)) = V(S(G))\backslash V(G)$. 
			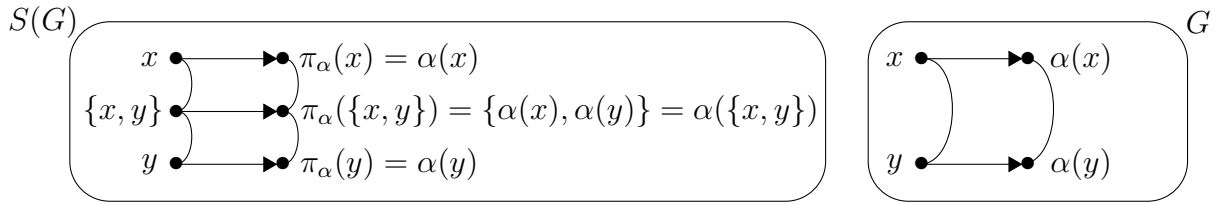
\begin{figure}[!ht]
				\centering
				\begin{minipage}{\textwidth}
					\centering
					\begin{tikzpicture}[scale=0.7]
						\draw[rounded corners=15pt] (-11,-1.7) rectangle (3.2,1.7);
						\draw[rounded corners=15pt] (4,-1.7) rectangle (10,1.7);
						\draw (10.2,1.7) node {$G$};
						\draw (5,1) node {$\bullet$};
						\draw (4.5,1) node {$x$};
						\draw (5,-1) node {$\bullet$};
						\draw (4.5,-1) node {$y$};
						\draw (7,1) node {$\bullet$};
						\draw (8,1) node {$\alpha (x)$};
						\draw (7,-1) node {$\bullet$};
						\draw (8,-1) node {$\alpha(y)$};
						\draw[black, -triangle 60] (5,1) -- (6.9,1);
						\draw[black, -triangle 60] (5,-1) -- (6.9,-1);
						\draw (5,1) to[out=5,in=5] (5,-1);
						\draw (6.9,1) to[out=5,in=5] (6.9,-1);

						\draw (-11.5,1.7) node {$S(G)$};
						\draw (-9,0) node {$\bullet$};
						\draw (-10,0) node {$\{x,y\}$};
						\draw (-9,1) node {$\bullet$};
						\draw (-9.5,1) node {$x$};
						\draw (-9,-1) node {$\bullet$};
						\draw (-9.5,-1) node {$y$};
						
						\draw (-7,0) node {$\bullet$};
						\draw (-1.8,0) node {$\pi_{\alpha}(\{x,y\})=\{\alpha(x), \alpha(y)\}=\alpha(\{x,y\})$};
						\draw (-7,1) node {$\bullet$};
						\draw (-5,1) node {$\pi_{\alpha}(x)=\alpha (x)$};
						\draw (-7,-1) node {$\bullet$};
						\draw (-5,-1) node {$\pi_{\alpha}(y)=\alpha(y)$};
						
						\draw[black, -triangle 60] (-9,0) -- (-7.1,0);
						\draw[black, -triangle 60] (-9,1) -- (-7.1,1);
						\draw[black, -triangle 60] (-9,-1) -- (-7.1,-1);
						\draw (-9,1) to[out=5,in=5] (-9,0);
						\draw (-9,0) to[out=5,in=5] (-9,-1);
						\draw (-7,1) to[out=5,in=5] (-7,0);
						\draw (-7,0) to[out=5,in=5] (-7,-1);
						
					\end{tikzpicture}
				\end{minipage}
				\caption{\em Graphs $S(G)$ and $G$.}
				\label{Figure 4}
			\end{figure}
			
			Since $\alpha$ is a $g$-preserving automorphism of $G$, we have the following:
			
			\begin{enumerate}
				\item  For all $v\in V(G)$, we have 
				$g(\alpha(v))=g(v)\implies f(\alpha(v))=f(v)\implies f(\pi_{\alpha}(v))=f(v)$. 
				
				\item  If $\{x,y\}\in V(S(G))\backslash V(G)$ and $N_{S(G)}(a)$ is the open neighborhood of $a \in V(S(G))$, then
				\begin{center}
					$\alpha(\{x,y\})\in N_{S(G)}(\alpha(x))\cap N_{S(G)}(\alpha(y))$,
				\end{center}
				since automorphisms preserve adjacency. 
				Thus, $\{\alpha(x),\alpha(y)\}=\alpha(\{x,y\})$ since $G$ is a simple graph (see Figure \ref{Figure 4}). Consequently, we have
				\begin{align*}
					g(\alpha(\{x,y\}))=g(\{x,y\}) &\implies g(\{\alpha(x),\alpha(y)\})=g(\{x,y\})\\ 
					&\implies f(\{\alpha(x),\alpha(y)\})=f(\{x,y\})\\
					&\implies f(\pi_{\alpha}(\{x,y\}))=f(\{x,y\}).
				\end{align*}
			\end{enumerate}
			
			Thus $\pi_{\alpha}$ is an automorphism of $S(G)$ preserving $f$, and so $\pi_{\alpha}=id_{S(G)}$ since $f$ is a distinguishing vertex coloring of $S(G)$. Consequently, we have $\alpha=id_{G}$.
		\end{proof}
		
		Secondly, we show that $D(S(G))\leq D''(G)$. Let $f:V(G)\cup E(G)\to \{1,...,D''(G)\}$ be a total distinguishing coloring of $G$. We define $\Tilde{f}: V(S(G)) \to \{1,...,D''(G)\}$ so that $\Tilde{f}(x)=f(x)$ for all $x\in V(G)$ and $\Tilde{f}(\{x,y\})=f(\{x,y\})$ for all $\{x,y\}\in V(S(G))\backslash V(G)=E(G)$. We claim that $\Tilde{f}$ is a distinguishing vertex coloring of the subdivision graph $S(G)$. Let $\psi$ be a $\Tilde{f}$-preserving automorphism of $S(G)$.  
		If $H = \{\pi_{\alpha}: \alpha \in Aut(G)\}$ where $\pi_{\alpha}$ is the induced automorphism of 
		$\alpha\in Aut(G)$, which is defined in the proof of Claim \ref{Claim 3.4}, then $H\cong Aut(G)$. Moreover, $H=Aut(S(G))$ since $G\not\cong C_{n}$ (cf. \cite[the proof of Theorem 4]{Mir2024}). 
		Thus, $\psi=\pi_{\alpha}$ for some $\alpha\in Aut(G)$. Furthermore, $\psi(V(G))=V(G)$ and $\psi(E(G))=E(G)$. 
		Since $\psi$ is a $\Tilde{f}$-preserving automorphism of $S(G)$, we can see that $\alpha$ is an automorphism of $G$ preserving $f$ following the arguments in the proof of Claim \ref{Claim 3.4}.\footnote{In particular, we have the following:
			
			\begin{enumerate}
				\item  for all $v\in V(G)$, 
				$\Tilde{f}(\psi(v))=\Tilde{f}(v)\implies f(\psi(v))=f(v)\implies f(\pi_{\alpha}(v))=f(v)\implies f(\alpha(v))=f(v)$. 
				\item  for all $\{x,y\}\in V(S(G))\backslash V(G)=E(G)$, 
				we have 
				$\Tilde{f}(\psi(\{x,y\}))=\Tilde{f}(\{x,y\})\implies f(\psi(\{x,y\}))=f(\{x,y\})\implies f(\pi_{\alpha}(\{x,y\}))=f(\{x,y\})\implies f(\{\alpha(x),\alpha(y)\})=f(\{x,y\}) \implies f(\alpha\{x,y\})=f(\{x,y\})$.
			\end{enumerate}
		}
		So, $\alpha=id_{G}$ since $f$ is a total distinguishing coloring of $G$. Consequently, $\psi=\pi_{\alpha}=id_{S(G)}$.
	\end{proof}
	
	\begin{corr}\label{Corollary 3.5}
		{\em If $G$ is a connected graph of order $n\geq 3$, then $D(S(G))< min\{D(G), D'(G)\}$ and $D(S(G))\leq \lceil \sqrt{\Delta(G)}\rceil$. Moreover, the bound $D(S(G))\leq \lceil \sqrt{\Delta(G)}\rceil$ is sharp.}
	\end{corr}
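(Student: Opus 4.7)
The plan is to derive the three assertions from Theorem~\ref{Theorem 3.3} and Fact~\ref{Fact 3.2}(1), together with an explicit sharp family. I read the displayed ``$<$'' in the statement as ``$\leq$'', since a rigid connected graph of order $n\geq 3$ satisfies $D(G)=D'(G)=D''(G)=1$, so strict inequality cannot hold in general; all of the constructions below yield the non-strict bound.

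For the upper bounds, I would first establish $D''(G)\leq\min\{D(G),D'(G)\}$ directly. Given a distinguishing vertex coloring $c\colon V(G)\to\{1,\ldots,D(G)\}$, define a total coloring $\tilde c$ by $\tilde c(v)=c(v)$ for $v\in V(G)$ and $\tilde c(e)=1$ for every $e\in E(G)$. Any automorphism of $G$ preserving $\tilde c$ in particular preserves $c$ on $V(G)$, hence is trivial; so $D''(G)\leq D(G)$. The symmetric construction, taking a distinguishing edge coloring with $D'(G)$ colors and assigning the single color $1$ to every vertex, gives $D''(G)\leq D'(G)$. Combining these with Theorem~\ref{Theorem 3.3} yields $D(S(G))\leq\min\{D(G),D'(G)\}$, and Fact~\ref{Fact 3.2}(1) together with Theorem~\ref{Theorem 3.3} gives $D(S(G))\leq\lceil\sqrt{\Delta(G)}\,\rceil$.

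For sharpness, I would take $G=K_{1,n}$ with $n\geq 3$, so $\Delta(G)=n$. I claim $D''(K_{1,n})=\lceil\sqrt{n}\,\rceil$: in any total $k$-coloring, each leaf $\ell$ carries a ``label'' consisting of the pair (color of $\ell$, color of the edge from $\ell$ to the center), drawn from $k^2$ possible pairs. If $k^2<n$, then by pigeonhole two distinct leaves share the same label, and the transposition swapping them is a nontrivial color-preserving automorphism of $K_{1,n}$; hence $k\geq\lceil\sqrt{n}\,\rceil$. Conversely, for $k=\lceil\sqrt{n}\,\rceil$ there are $k^2\geq n$ available labels, so one can assign pairwise distinct labels to the $n$ leaves; the central vertex can take any color, and the resulting total coloring is distinguishing because the only nontrivial automorphisms of $K_{1,n}$ permute the leaves. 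Theorem~\ref{Theorem 3.3} then gives $D(S(K_{1,n}))=\lceil\sqrt{n}\,\rceil=\lceil\sqrt{\Delta(K_{1,n})}\,\rceil$, establishing sharpness.

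The main obstacle is pinpointing an effective sharp family; once $K_{1,n}$ is chosen, the matching lower and upper bounds on $D''(K_{1,n})$ both follow from a short pigeonhole-plus-explicit-construction argument on labels, and the remainder of the corollary reduces to direct invocations of Theorem~\ref{Theorem 3.3} and Fact~\ref{Fact 3.2}(1).
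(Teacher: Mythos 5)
Your proof is correct and follows essentially the same route as the paper: both reduce the corollary to Theorem \ref{Theorem 3.3} together with a bound on $D''(G)$ and Fact \ref{Fact 3.2}(1), and both use stars to witness sharpness. The differences are in what is proved versus cited: the paper invokes \cite{KPW2016} for the inequality $D''(G)< \min\{D(G), D'(G)\}$ and merely asserts $D(S(K_{1,m}))=\lceil \sqrt{m}\rceil$, whereas you prove $D''(G)\leq\min\{D(G),D'(G)\}$ directly (extend a distinguishing vertex, resp.\ edge, coloring to a total coloring by making the other part monochromatic) and compute $D''(K_{1,n})=\lceil\sqrt{n}\rceil$ via pigeonhole on the pairs (leaf colour, pendant-edge colour); both arguments are sound. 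Your decision to read the displayed ``$<$'' as ``$\leq$'' is in fact a necessary correction rather than a convenience: for any asymmetric graph all three parameters equal $1$, and already for $P_{3}$ one has $D''(P_3)=D(P_3)=D'(P_3)=2$, so the strict inequality in the first assertion (and in the form of the fact attributed to \cite{KPW2016}) fails and should be non-strict.
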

	
	\begin{proof}
		The first assertion follows from the fact that $D''(G)< min\{D(G), D'(G)\}$ (cf. \cite[paragraph after Proposition 1.8]{KPW2016}), Theorem \ref{Theorem 3.3}, and Fact \ref{Fact 3.2}(1). 
		To see that the bound $D(S(G))\leq \lceil \sqrt{\Delta(G)}\rceil$ is sharp, we observe that for any star $K_{1,m}$ we have
		$D(S(K_{1,m}))= \lceil \sqrt{m}\rceil$.
	\end{proof}

	\section{Distinguishing chromatic number of subdivision graphs}

	The following definition is due to Sabidussi \cite{Sab1964}.
	
	\begin{defn}\label{Definition 4.1}
		The graph $G=(V(G), E(G))$ is {\em irreducible} if for every pair of distinct vertices $x, y \in V(G)$, we have $N_{G}(x) \neq N_{G}(y)$, where $N_{G}(v)$ denotes the open neighborhood of $v\in V(G)$.
	\end{defn}

	The following results due to Mirafzal \cite{Mir2024} will be useful for our purpose.
	
	\begin{lem}{(Mirafzal; \cite[Lemma 1]{Mir2024})}\label{Lemma 4.2}
		{\em Let $G = (U \cup W, E)$, $U \cap W = \emptyset$ be a connected bipartite graph. If $f$ is an
			automorphism of the graph $G$, then $f(U) = U$ and $f(W) = W$, or $f(U) = W$ and $f(W) = U$.}
	\end{lem}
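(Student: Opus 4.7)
The plan is to exploit the standard fact that in a connected bipartite graph, the bipartition is determined by the parity of distances from any fixed vertex, so any adjacency-preserving bijection must respect this parity. I would fix an arbitrary vertex $u_{0}\in U$ and split into two cases according to whether $f(u_{0})\in U$ or $f(u_{0})\in W$.

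In the first case ($f(u_{0})\in U$), I would argue that $f(U)\subseteq U$ and $f(W)\subseteq W$. Given any $u\in U$, connectedness provides a walk $u_{0}=x_{0}, x_{1},\dots, x_{k}=u$ in $G$. Because $G$ is bipartite with parts $U$ and $W$, consecutive vertices along the walk alternate between the two parts, so starting and ending in $U$ forces $k$ to be even. Since $f$ is an automorphism, applying $f$ yields the walk $f(u_{0}), f(x_{1}),\dots, f(x_{k})$ in $G$, which again alternates between $U$ and $W$; starting from $f(u_{0})\in U$ and taking an even number of steps must land in $U$, so $f(u)\in U$. The identical argument applied to any $w\in W$ (where any walk from $u_{0}$ to $w$ has odd length) gives $f(w)\in W$.

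In the second case ($f(u_{0})\in W$), the same parity argument, now starting the image walk from a vertex of $W$, yields $f(U)\subseteq W$ and $f(W)\subseteq U$.

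To upgrade the set inclusions to equalities, I would apply the same analysis to the automorphism $f^{-1}$, which forces the reverse inclusions. There is no serious obstacle here — the only care needed is in cleanly articulating the parity of walk-lengths in bipartite graphs and noting that this parity is preserved under automorphisms precisely because edges are sent to edges, so any walk is sent to a walk of the same length.
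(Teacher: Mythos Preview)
Your argument is correct and is the standard parity-of-distance proof of this well-known fact about connected bipartite graphs. Note, however, that the paper does not give its own proof of this lemma: it is quoted from Mirafzal \cite[Lemma~1]{Mir2024} and used as a black box. There is therefore nothing in the paper to compare your argument against. One minor simplification: once you have established the inclusions $f(U)\subseteq U$ and $f(W)\subseteq W$ (respectively $f(U)\subseteq W$ and $f(W)\subseteq U$), equality follows immediately from the fact that $f$ is a bijection of the finite set $U\cup W$ onto itself and that $U$, $W$ are disjoint, so appealing to $f^{-1}$ is not strictly necessary.
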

	
	\begin{lem}\label{Lemma 4.3}
		{(Mirafzal; \cite[Lemma 2]{Mir2024})}
		{\em Let $G = (U \cup W, E)$, $U \cap W = \emptyset$ be a bipartite irreducible graph. If $f$ is an
			automorphism of $G$ such that $f(u) = u$ for every $u \in U$, then $f$ is the identity automorphism
			of $G$.
		}  
	\end{lem}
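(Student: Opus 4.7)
The plan is to exploit the bipartite structure together with the irreducibility hypothesis in a very direct way. Since $f$ fixes every vertex of $U$ pointwise, the image $f(U)$ equals $U$; assuming $U \neq \emptyset$ (otherwise $G$ has no edges and the claim is vacuous), Lemma~\ref{Lemma 4.2} forces the alternative $f(U) = U$ and $f(W) = W$. So it suffices to show that $f$ fixes each vertex of $W$.

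To show this, I would take an arbitrary $w \in W$, set $w' := f(w) \in W$, and prove $N_G(w) = N_G(w')$. The identity follows from a single calculation: for every $u \in U$, since $f$ is an automorphism,
\begin{equation*}
    u \in N_G(w) \;\Longleftrightarrow\; f(u) \in N_G(f(w)) \;\Longleftrightarrow\; u \in N_G(w'),
\end{equation*}
where the second equivalence uses the hypothesis $f(u) = u$. Because $G$ is bipartite with parts $U$ and $W$, the neighborhoods $N_G(w)$ and $N_G(w')$ are both subsets of $U$, so the equivalence above completely determines them; hence $N_G(w) = N_G(w')$. Irreducibility of $G$ then yields $w = w'$, i.e.\ $f(w) = w$. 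Since $w \in W$ was arbitrary, $f = id_G$.

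There is no substantial obstacle here; the only point that deserves care is the invocation of Lemma~\ref{Lemma 4.2}, where one must ensure $U$ is nonempty so that the hypothesis ``$f$ fixes $U$ pointwise'' rules out the swap $f(U) = W$, $f(W) = U$. Once that is noted, the argument reduces to a one-line neighborhood comparison combined with the definition of irreducibility.
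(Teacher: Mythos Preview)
The paper does not supply its own proof of Lemma~\ref{Lemma 4.3}; it is quoted verbatim from Mirafzal \cite[Lemma~2]{Mir2024} and used as a black box. So there is no in-paper argument to compare against.

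Your argument is correct in substance: the neighborhood computation $N_G(w)=N_G(f(w))$ together with irreducibility immediately forces $f(w)=w$. One small technical point: you invoke Lemma~\ref{Lemma 4.2} to conclude $f(W)=W$, but that lemma carries a connectedness hypothesis which Lemma~\ref{Lemma 4.3} does not assume. Fortunately you do not need Lemma~\ref{Lemma 4.2} at all: if some $w\in W$ had $f(w)=u'\in U$, then $f(u')=u'=f(w)$ would contradict injectivity of $f$, so $f(W)\subseteq W$ follows directly from $f\restriction U = id_U$ and $U\cap W=\emptyset$. With that adjustment the proof is clean and self-contained.
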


	\begin{lem}\label{Lemma 4.4}
		{\em If $H$ is a connected graph where $H \not\cong C_{n}$, then for every $\varphi\in Aut(S(H))$, we have $\varphi(V(H))=V(H)$ and $\varphi(V(S(H))\setminus V(H))=V(S(H))\setminus V(H)$.
		}
	\end{lem}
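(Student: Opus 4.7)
The plan is to invoke Lemma \ref{Lemma 4.2} and then eliminate the ``swap'' case by a degree count. First I observe that $S(H)$ is a connected bipartite graph with the natural bipartition $V(S(H))=V(H)\cup (V(S(H))\setminus V(H))$, where the second part is exactly $E(H)$: indeed, every edge of $S(H)$ joins a vertex $v\in V(H)$ to a subdivision vertex $\{u,v\}\in E(H)$, and connectedness of $S(H)$ follows from connectedness of $H$. Applying Lemma \ref{Lemma 4.2} to $\varphi\in Aut(S(H))$ with $U=V(H)$ and $W=V(S(H))\setminus V(H)$, we get that either $\varphi(V(H))=V(H)$ (in which case we are done), or $\varphi$ swaps the two parts, i.e.\ $\varphi(V(H))=V(S(H))\setminus V(H)$ and $\varphi(V(S(H))\setminus V(H))=V(H)$.

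Next I would show that the swap case forces $H\cong C_n$, contradicting the hypothesis. The key observation is that every vertex $w=\{u,v\}\in V(S(H))\setminus V(H)$ has degree exactly $2$ in $S(H)$, because by construction its only neighbors in $S(H)$ are $u$ and $v$. On the other hand, for any $v\in V(H)$, the neighbors of $v$ in $S(H)$ are precisely the subdivision vertices $\{v,y\}$ with $y\in N_H(v)$, so the degree of $v$ in $S(H)$ equals $\deg_H(v)$.

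If $\varphi$ swaps the parts, then for each $v\in V(H)$ the vertex $\varphi(v)$ lies in $V(S(H))\setminus V(H)$, so $\deg_{S(H)}(v)=\deg_{S(H)}(\varphi(v))=2$ since automorphisms preserve degree. Hence $\deg_H(v)=2$ for every $v\in V(H)$, i.e.\ $H$ is a $2$-regular connected graph, which means $H\cong C_n$ for some $n$. This contradicts $H\not\cong C_n$, so the swap case is impossible and we conclude $\varphi(V(H))=V(H)$ and $\varphi(V(S(H))\setminus V(H))=V(S(H))\setminus V(H)$.

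The argument is straightforward once Lemma \ref{Lemma 4.2} is applied; the only non-trivial point is recognizing that the bipartition of $S(H)$ has a vertex-degree asymmetry (one side consists entirely of degree-$2$ vertices) which forces the non-swap case whenever $H$ has a vertex of degree different from $2$, i.e.\ whenever $H\not\cong C_n$. I do not anticipate a substantive obstacle.
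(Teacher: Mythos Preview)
Your proof is correct and follows essentially the same approach as the paper: the paper's own proof simply cites Lemma~\ref{Lemma 4.2} and refers to Mirafzal's argument, and the degree-count you give (subdivision vertices all have degree~$2$, so a swap forces $H$ to be $2$-regular and hence a cycle) is precisely the standard way to eliminate the swap case. There is nothing to add.
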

	\begin{proof}
		This follows from Lemma \ref{Lemma 4.2} (cf. the proof of \cite[Theorem 4]{Mir2024}). 
	\end{proof}
	
	\subsection{Results}
	\begin{thm}\label{Theorem 4.5}
		{\em If $G$ is a connected graph where $D(G)\neq 1$ and $\chi_D(G) = 2$, then $\vert Aut (G)\vert = 2$.
		}
	\end{thm}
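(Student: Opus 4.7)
The plan is to extract structural information from $\chi_D(G)=2$, classify automorphisms according to how they interact with the bipartition, and then leverage the hypothesis $D(G)\neq 1$ to pin down the exact size of $Aut(G)$.

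First I would observe that since $\chi_D(G)=2\geq \chi(G)$ and $G$ is connected on at least two vertices, $G$ is connected bipartite, and hence has a \emph{unique} bipartition $V(G)=U\cup W$ (this uniqueness is standard for connected bipartite graphs). Let $c:V(G)\to\{1,2\}$ realise $\chi_D(G)$. Because $c$ is proper, its color classes must coincide with $U$ and $W$, so I can rename colors and assume $c^{-1}(1)=U$ and $c^{-1}(2)=W$.

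Next, I would invoke the bipartite dichotomy (Lemma \ref{Lemma 4.2} applied to $G$ itself, which is connected bipartite): every $\psi\in Aut(G)$ either satisfies $\psi(U)=U$, $\psi(W)=W$, or swaps them, $\psi(U)=W$ and $\psi(W)=U$. In the first case, $\psi$ maps every vertex to one of the same color, so $\psi$ preserves $c$; since $c$ is distinguishing, $\psi=id_G$. Thus the subgroup $H\leq Aut(G)$ of bipartition-preserving automorphisms is trivial. If there exist two bipartition-swapping automorphisms $\psi_1,\psi_2$, then $\psi_1\psi_2^{-1}$ preserves the bipartition, hence lies in $H=\{id_G\}$, so $\psi_1=\psi_2$. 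Therefore there is at most one swap automorphism, giving $|Aut(G)|\leq 2$.

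It remains to show that the group is not trivial. Here I would use $D(G)\neq 1$: if $D(G)=1$ were possible, then the constant coloring would be distinguishing, forcing every automorphism to be the identity; equivalently, $D(G)=1$ iff $|Aut(G)|=1$. Hence $D(G)\neq 1$ yields $|Aut(G)|\geq 2$, and combined with the upper bound we conclude $|Aut(G)|=2$.

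I do not expect any serious obstacle — the argument is essentially a clean coset argument together with the observation that proper $2$-colorings of a connected bipartite graph are forced by its bipartition. The only point requiring a little care is justifying the uniqueness of the bipartition (so that the color classes of \emph{any} distinguishing proper $2$-coloring coincide with $U$ and $W$), but this is a well-known property of connected bipartite graphs.
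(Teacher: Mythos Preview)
Your proposal is correct and follows essentially the same approach as the paper: both argue that any nontrivial automorphism must swap the two parts of the bipartition (since a bipartition-preserving automorphism would preserve the proper distinguishing $2$-coloring and hence be the identity), and then use that the composition of two swapping automorphisms preserves the bipartition to conclude there is at most one nontrivial automorphism. The only cosmetic difference is that the paper packages the uniqueness step as a pointwise claim (each vertex has a unique image under nontrivial automorphisms), whereas you phrase it directly as a coset argument; the underlying reasoning is identical.
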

	
	\begin{proof}
		Let $f:V(G)\rightarrow \{1,2\}$ be a proper distinguishing vertex coloring of $G$. Then $G$ is bipartite with bipartitions $U=f^{-1}(1)$ and $W=f^{-1}(2)$. Let $\psi$ be a nontrivial automorphism of $G$. Then $\psi$ is not an $f$-preserving automorphism of $G$. 
		Thus, $f(\psi(v))\neq f(v)$ for some $v\in V(G)$. So if $v\in U$, then $\psi(v)\in W$ (and vice versa). By Lemma \ref{Lemma 4.2}, $\psi(W) = U$ and $\psi(U) = W$.
		
		\begin{claim}\label{Claim 4.6}
			{\em Let $B_{a}=\{b\in V(G): \psi(a)=b$ for some $\psi\in Aut(G)\backslash \{id_{G}\}\}$ for each $a\in V(G)$. Then $B_{a}$ is a singleton set.} 
		\end{claim}
		
		\begin{proof}
			Assume $a \in U$, $ \psi_1(a) = b_1$ and $\psi_2 (a) = b_2$ for some $\psi_1,\psi_2\in Aut(G)\backslash\{id_{G}\}$ where $b_{1}, b_{2}\in V(G)$ and $b_{1}\neq b_{2}$. Then, $b_{1}, b_{2}\in W$, and $ \psi_2\psi_1^{-1} (b_1) = b_2$, where $\psi_2\psi_1^{-1}\in Aut(G)\backslash\{id_{G}\}$. This contradicts that $\psi_2\psi_1^{-1}(W) = U$. Similarly, if $a\in W$, the proof follows.
		\end{proof}
		
		Since $D(G)\neq 1$, the graph $G$ is not asymmetric. By Claim \ref{Claim 4.6}, there can be only one nontrivial automorphism of $G$. Hence, $\vert Aut(G)\vert = 2$.
	\end{proof}

	\begin{thm}\label{Theorem 4.7}
		{\em Let $G$ be a connected graph of order $n\geq 3$. Then the following holds:
			\begin{enumerate}
				\item If $G\not\cong C_{n}$ and $D(G)\geq 3$, then $\chi_{D}(S(G))\leq D(G)$. 
				
				\item Under the assertions of (1), the bound $\chi_{D}(S(G))\leq D(G)$ is sharp.
				\item If $G\not\cong C_{n}$ and $D(G)=2$, then $\chi_{D}(S(G))=3$.
				\item If $D(G)=1$, then $\chi_{D}(S(G))=2$. 
				\item If $G\cong C_{n}$ such that $n=3$ or $n\geq 6$, then $\chi_{D}(S(G))=D(G)+1$.
				\item If $G\cong C_{n}$ such that $n=4$ or $n=5$, then $\chi_{D}(S(G))=D(G)$. 
			\end{enumerate}
		}
	\end{thm}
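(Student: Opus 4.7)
The plan is to handle all six items using Fact \ref{Fact 3.2}, Lemma \ref{Lemma 4.3}, and Lemma \ref{Lemma 4.4}. A key preliminary observation is that for any connected simple $G$ of order $n\geq 3$, $S(G)$ is bipartite with parts $V(G)$ and $E(G)$ and is irreducible: distinct vertices of $V(G)$ are incident to different edge-sets in $G$, distinct $e,e'\in E(G)$ have different endpoint sets, and the neighborhoods of vertices from opposite parts live in disjoint sets. Hence Lemma \ref{Lemma 4.3} applies to $S(G)$ with $U=V(G)$.

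For item (1), I would extend a $D(G)$-distinguishing vertex coloring $f$ of $G$ to $g:V(S(G))\to\{1,...,D(G)\}$ by setting $g|_{V(G)}=f$ and, for each $w=\{x,y\}\in V(S(G))\setminus V(G)$, choosing $g(w)$ in the non-empty set $\{1,...,D(G)\}\setminus\{f(x),f(y)\}$ (using $D(G)\geq 3$). Properness is immediate. If $\psi\in Aut(S(G))$ preserves $g$, then Lemma \ref{Lemma 4.4} gives $\psi(V(G))=V(G)$, so $\psi|_{V(G)}$ is an automorphism of $G$ preserving $f$ and hence trivial, and then Lemma \ref{Lemma 4.3} forces $\psi=id_{S(G)}$. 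Item (4) is the degenerate case: $D(G)=1$ forces $Aut(G)$ trivial, so $Aut(S(G))$ is trivial by Fact \ref{Fact 3.2}(2) (using $G\not\cong C_{n}$), and the canonical bipartition $2$-coloring of $S(G)$ is trivially distinguishing.

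For item (3), the upper bound mimics (1), coloring every vertex of $V(S(G))\setminus V(G)$ with a third color. The lower bound rests on the fact that a connected bipartite graph with at least one edge admits, up to color swap, only one proper $2$-coloring, namely its bipartition; since every element of $Aut(S(G))\cong Aut(G)$ preserves $V(G)$ by Lemma \ref{Lemma 4.4} and $|Aut(G)|\geq 2$ when $D(G)=2$, no proper $2$-coloring of $S(G)$ can be distinguishing. Item (2) is then witnessed by $G=K_{1,3}$: $D(K_{1,3})=3$ since the three leaves can be freely permuted, and the same bipartition argument combined with $|Aut(S(K_{1,3}))|=6$ yields $\chi_{D}(S(K_{1,3}))\geq 3$, matching the upper bound from (1).

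For items (5) and (6), I use $S(C_{n})=C_{2n}$ together with the known values $\chi_{D}(C_{m})=4$ for $m\in\{4,6\}$ and $\chi_{D}(C_{m})=3$ for $m=3$, $m=5$, or $m\geq 7$, combined with $D(C_{3})=D(C_{4})=D(C_{5})=3$ and $D(C_{n})=2$ for $n\geq 6$; a short case check delivers the claimed equalities. The main technical point to make explicit is the lower bound in (3): one must argue carefully that every proper $2$-coloring of a connected bipartite graph is, up to color swap, the bipartition itself, and then combine this with Lemma \ref{Lemma 4.4} and Fact \ref{Fact 3.2}(2) to conclude that such a coloring is preserved by the entire automorphism group of $S(G)$. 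The irreducibility of $S(G)$ underpinning Lemma \ref{Lemma 4.3} is a secondary technical ingredient to verify before the main arguments run.
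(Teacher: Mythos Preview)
Your proposal is correct and follows essentially the same route as the paper: extend a distinguishing coloring of $G$ to $S(G)$, use Lemma~\ref{Lemma 4.4} to keep $V(G)$ invariant, then Lemma~\ref{Lemma 4.3} to finish; for the lower bound in (3) use uniqueness of the proper $2$-coloring of a connected bipartite graph together with a nontrivial automorphism.

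Two small differences are worth noting. For sharpness in (2) you use $K_{1,3}$, whereas the paper uses the endline graph $K_5^{+}$; your example is simpler and works for the same reason (the unique proper $2$-coloring of $S(K_{1,3})$ is preserved by its nontrivial automorphism group). For (5)--(6) your quoted values $D(C_3)=3$ and $\chi_D(C_6)=4$ are the correct ones; the paper's proof actually writes $D(C_3)=2$ and $\chi_D(C_6)=3$, which are typos (indeed the Remark immediately following the theorem in the paper records $\chi_D(C_6)=4$), though the theorem statement itself is unaffected since $4=3+1$. Your irreducibility remark for $S(G)$ is a useful explicit check that the paper leaves implicit.
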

	\begin{proof}
		(1).  
		Let $f: V(G)\to \{1,...,D(G)\}$ be a distinguishing vertex coloring of $G$. Since $D(G)\geq 3$, we can define a vertex coloring $\Tilde{f}: V(S(G)) \to \{1,..., D(G)\}$ of $S(G)$ as follows:\footnote{In particular, we color the vertices of $V(G)$ using the coloring $f$, and we color $\{x,y\}\in V(S(G))\backslash V(G)$ with any color other than $f(x)$ and $f(y)$.} 
		$$\Tilde{f}(v)=\begin{cases}
			f(v) & \text{ if $v\in V(G)$}\\
			i & \text{ if $i\in \{1,..., D(G)\}\backslash \{f(x), f(y)\}$, $v=\{x,y\}$, $x,y\in V(G)$.}
		\end{cases}$$
		Clearly, $\Tilde{f}$ is proper. We show that $\Tilde{f}$ is a distinguishing coloring of $S(G)$. Let $\psi$ be a $\Tilde{f}$-preserving automorphism of $S(G)$. 
		Since $G\not\cong C_{n}$, we have $\psi(V(G))=V(G)$ and $\psi(V(S(G))\backslash V(G))= V(S(G))\backslash V(G)$ by Lemma \ref{Lemma 4.4}.
		Moreover, $\psi\restriction V(G)$ is an automorphism of $G$ that preserves $f$. Since $f$ is distinguishing, $\psi\restriction V(G)=id_{G}$.
		Since $S(G)$ is a bipartite irreducible graph with bipartitions $V(G)$ and $V(S(G))\backslash V(G)$, hence from Lemma \ref{Lemma 4.3}, it follows that $\psi=id_{S(G)}$. 
		
		(2). Since
		$\chi_{D}(S(K_{5}^{+}))= D(K_{5}^{+})=3$ where $K_{5}^{+}$ is the endline graph of $K_{5}$, the bound $\chi_{D}(S(G))\leq D(G)$ is sharp.
		
		(3). Let $f: V(G)\to \{1,2\}$ be a distinguishing vertex coloring of $G$. We define $\Tilde{f}: V(S(G)) \to \{1,2,3\}$ such that $$\Tilde{f}(v)=\begin{cases}
			f(v) & \text{ if $v\in V(G)$}\\
			3 & \text{ if $v=\{x,y\}$, $x,y\in V(G)$.}
		\end{cases}$$
		Clearly, $\Tilde{f}$ is proper. Following the arguments of (1), $\Tilde{f}$ is distinguishing. Thus, $\chi_D(S(G))\leq 3$. It is enough to show that $\chi_D(S(G))\neq 2$ since $\chi_{D}(S(G))\geq \chi(S(G))=2$. Let $f: V(S(G)) \to \{1,2\}$ be any proper vertex coloring. We show that $f$ cannot be a distinguishing coloring. 
		
		\begin{claim}\label{Claim 4.8}
			{\em For all $x,y\in V(G)$, $f(x) = f(y)$. Thus, $f(w) = f(v)$ for all $w,v\in V(S(G))\setminus V(G)$.}
		\end{claim}
		\begin{proof}
			Assume $f(x)\neq f(y)$ for some $x,y\in V(G)$. Since $G$ is connected, there is a path of $n$ vertices connecting $x$ and $y$ for some natural number $n$. Hence in $S(G)$, there is a path of $2n-1$ vertices connecting $x$ and $y$. But no path with an odd number of vertices can be properly colored with $2$ colors so that the end vertices get different colors.
		\end{proof}
		Since $G$ is not asymmetric, there exists a nontrivial automorphism $\alpha$ of $G$. Then $\pi_{\alpha}$, the induced automorphism of $\alpha$ which is defined in the proof of Claim \ref{Claim 3.4}, is a nontrivial automorphism of $S(G)$.
		Since $\pi_{\alpha}(V(G))=V(G)$ and $\pi_{\alpha}(V(S(G))\setminus V(G))=V(S(G))\setminus V(G)$, by applying Claim \ref{Claim 4.8}, we can see that $f(\pi_{\alpha}(v))=f(v)$ for all $v\in V(G)$ and $f(\pi_{\alpha}\{x,y\})=f(\{x,y\})$ for all $\{x,y\}\in V(S(G))\setminus V(G)$. So, $f$ cannot be a distinguishing coloring. 
		
		(4). If $D(G)=1$, then $D(S(G)) = 1$ by applying Lemma \ref{Lemma 4.3}. Hence $\chi_D(S(G)) = \chi(S(G)) = 2$, since $S(G)$ is bipartite. 
		
		(5)-(6). We note that if $G\cong C_{n}$, then $S(G)\cong C_{2n}$. The rest follows from the following: 
		\begin{itemize}
			\item If $n\geq 6$, then $\chi_{D}(S(G))=\chi_{D}(C_{2n})=3$ and $D(C_{n})=2$. 
			\item If $n=3$, then $\chi_{D}(S(G))=\chi_{D}(C_{6})=3$ and $D(C_{3})=2$. 
			\item If $n=4$ or $n=5$, then $\chi_{D}(S(G))=\chi_{D}(C_{2n})=3$ and $D(C_{n})=3$. 
		\end{itemize} 
	\end{proof}
	
	\begin{remark}
		If $G$ is a connected graph of order $n\geq 3$, then $\chi_{D}(S(G))\leq 2D''(G)$ and the bound is sharp. Collins, Hovey, and Trenk \cite[Proposition 1.1]{CHT2009} proved that $\chi_{D}(G)\leq \chi(G)D(G)$ for any graph $G$. Since $S(G)$ is a bipartite graph, we have $\chi(S(G))=2$. Consequently, we have 
		$\chi_{D}(S(G))\leq \chi(S(G))D(S(G))=2D''(G)$ by Theorem \ref{Theorem 3.3} and \cite[Proposition 1.1]{CHT2009}.
		To see that the bound is sharp, we observe that $\chi_{D}(S(C_{3}))=\chi_{D}(C_{6})=4$ and $D''(C_{3})=2$.
	\end{remark}


\begin{thebibliography}{99}
		
		\bibitem{AC1996}
		M. O. Albertson and K. L. Collins,
		\newblock {Symmetry Breaking in Graphs},
		\newblock {\em Electron. J. of Combin.}
		\newblock \textbf{3}
		\newblock (1996),
		\newblock no. 1.
		\newblock doi:\url{https://doi.org/10.37236/1242}
		
		\bibitem{AS2020}
		S. Alikhani and S. Soltani,
		\newblock {The chromatic distinguishing index of certain graphs},
		\newblock {\em AKCE Int J Graphs Co.}
		\newblock \textbf{17}
		\newblock (2020),
		\newblock no. 1,
		\newblock 131–138.
		\newblock doi:\url{https://doi.org/10.1016/j.akcej.2019.03.008}
		
		\bibitem{CHT2009}
		K. L. Collins, M. Hovey, and A. N. Trenk, 
		\newblock {Bounds on the Distinguishing Chromatic Number},
		\newblock {\em Electron. J. of Combin.}
		\newblock \textbf{16}
		\newblock (2009)
		\newblock no. 1.
		\newblock doi:\url{https://doi.org/10.37236/177}
		
		\bibitem{CT2006}
		K. L. Collins and A. N. Trenk, 
		\newblock {The Distinguishing Chromatic Number},
		\newblock {\em Electron. J. of Combin.}
		\newblock \textbf{13}
		\newblock (2006).
		\newblock doi: \url{https://doi.org/10.37236/1042}
		
		\bibitem{HY1976}
		T. Hamada and I.Yoshimura, 
		\newblock {Traversability and connectivity of the middle
			graph of a graph},
		\newblock {\em Discrete Math.}
		\newblock \textbf{14}
		\newblock (1976)
		\newblock no. 3, 
		\newblock 247–255.
		\newblock doi:\url{https://doi.org/10.1016/0012-365X(76)90037-6}
		
		\bibitem{Mir2024}
		S. M. Mirafzal, 
		\newblock {Some algebraic properties of the subdivision graph of a graph},
		\newblock {\em Commun. Comb. Optim.}
		\newblock \textbf{9}
		\newblock (2024)
		\newblock no. 2,
		\newblock 297-307.
		\newblock doi:\url{https://doi.org/10.22049/cco.2023.28270.1494}
		
		\bibitem{Nih1998}
		M. Nihei, 
		\newblock {On the chromatic number of middle graph of a graph},
		\newblock {\em Pi Mu Epsilon Journal}
		\newblock \textbf{10}
		\newblock (1998),
		\newblock no. 9,
		\newblock 704–708.
		\newblock url:\url{http://www.jstor.org/stable/24340547}
		
		\bibitem{KPW2016}
		R. Kalinowski, M. Pil\'{s}niak, M. Wo\'{z}niak: 
		\newblock {Distinguishing graphs by total colourings},
		\newblock {\em Ars Math. Contemp.}
		\newblock \textbf{11},
		\newblock 79-89
		\newblock (2016),
		\newblock no. 1.
		\newblock doi: \url{https://doi.org/10.26493/1855-3974.751.9a8}
		
		\bibitem{KP2015}
		R. Kalinowski and M. Pil\'{s}niak,
		\newblock {Distinguishing graphs by edge-colourings},
		\newblock {\em European J. Comb.}
		\newblock \textbf{45} \newblock (2015),
		\newblock 124–131.
		\newblock doi: \url{https://doi.org/10.1016/j.ejc.2014.11.003}
		
		\bibitem{Sab1964}
		G. Sabidussi: 
		\newblock {Vertex-transitive graphs},
		\newblock {\em Monatsh. Math.}
		\newblock \textbf{68},
		\newblock 426-438
		\newblock (1964).
		\newblock doi: \url{https://doi.org/10.1007/BF01304186}
		
	\end{thebibliography}
\end{document}